\theoremstyle{definition} \newtheorem{cor}{Corollary}[section]
\theoremstyle{definition} \newtheorem{lem}[cor]{Lemma} 
\theoremstyle{definition}  
\theoremstyle{definition} \newtheorem{thm}[cor]{Theorem}
\theoremstyle{definition} \newtheorem{eg}[cor]{Example}
\theoremstyle{definition} \newtheorem{defn}[cor]{Definition}
\theoremstyle{definition} 
\theoremstyle{definition} \newtheorem*{thma}{Theorem}
\theoremstyle{definition}  
\theoremstyle{definition} \newtheorem*{defa}{Definition}
\theoremstyle{definition}  
\theoremstyle{definition}  
\theoremstyle{definition} \newtheorem*{notationa}{Notation} 
\theoremstyle{definition} 
\theoremstyle{definition} 
\theoremstyle{definition} \newtheorem{rmk}[cor]{Remark}
\theoremstyle{definition} \newtheorem*{rmka}{Remark}
\theoremstyle{definition} 
\newcommand{\N}{\mathbb N}
\newcommand{\up}{\textup}
\newcommand{\X}{\widehat{X}}
\newcommand{\V}{\widehat{V}}
\newcommand{\E}{\widehat{E}}
\DeclareMathOperator{\Mop}{M}
\DeclareMathOperator{\Gop}{G}
\newcommand{\M}{\Mop(\X)}
\newcommand{\G}{\Gop(\X)}
\newcommand{\SPM}[1]{\Mop(#1)}
\newcommand{\SPG}[1]{\Gop(#1)}
\newcommand{\outdeg}{\deg^{+}}
\DeclareMathOperator{\indegree}{indegree}
\DeclareMathOperator{\outdegree}{outdegree}
\newcommand{\MAX}{\textup{MAX}}
\DeclareMathOperator{\supp}{supp}
\DeclareMathOperator{\cl}{cl}
\newcommand{\Z}{\mathbb Z}
\newcommand{\mi}{\iota}
\DeclareMathOperator{\mS}{I}
\DeclareMathOperator{\diam}{diam}
\title[Sandpile Monoids on Directed Graphs]{Algebraic and Combinatorial Aspects of Sandpile Monoids on Directed Graphs}
\author[S.\ Chapman, R.\ Garcia, L.\ D.\ Garc\'{i}a-Puente, M.\ E.\ Malandro, K.\ W.\ Smith]{Scott Chapman,  Rebecca Garcia, Luis David Garc\'{i}a-Puente, \\ Martin E. Malandro, Ken W. Smith}
\address{Postal address for all authors: Box 2206, Department of Mathematics and Statistics, Sam Houston State University, Huntsville, TX 77341-2206, USA}
\email{scott.chapman@shsu.edu, rgarcia@shsu.edu, lgarcia@shsu.edu, malandro@shsu.edu, kenwsmith@shsu.edu}
\date{\today}
\keywords{sandpile group, sandpile monoid, directed graph, maximal subgroup, distance-regular graph}
\subjclass[2010]{05C20, 05C25, 05C38, 05C57}
\begin{document}

\begin{abstract} The sandpile group of a graph is a well-studied object that combines ideas from algebraic graph theory, group theory, dynamical systems, and statistical physics. A graph's sandpile group is part of a larger algebraic structure on the graph, known as its sandpile monoid. Most of the work on sandpiles so far has focused on the sandpile group rather than the sandpile monoid of a graph, and has also assumed the underlying graph to be undirected. A notable exception is the recent work of Babai and Toumpakari, which builds up the theory of sandpile monoids on directed graphs from scratch and provides many connections between the combinatorics of a graph and the algebraic aspects of its sandpile monoid.

In this paper we primarily consider sandpile monoids on directed graphs, and we extend the existing theory in four main ways. First, we give a combinatorial classification of the maximal subgroups of a sandpile monoid on a directed graph in terms of the sandpile groups of certain easily-identifiable subgraphs. Second, we point out certain sandpile results for undirected graphs that are really results for sandpile monoids on directed graphs that contain exactly two idempotents. Third, we give a new algebraic constraint that sandpile monoids must satisfy and exhibit two infinite families of monoids that cannot be realized as sandpile monoids on any graph. Finally, we give an explicit combinatorial description of the sandpile group identity for every graph in a family of directed graphs which generalizes the family of (undirected) distance-regular graphs. This family includes many other graphs of interest, including iterated wheels, regular trees, and regular tournaments.
\end{abstract}

\maketitle

\section{Introduction}

The sandpile model developed by Bak, Tang, and Wiesenfeld (BTW) is a
mathematical model for dynamical systems that naturally evolve toward
critical states, exemplifying the complex behavior known as
self-organized criticality~\cite{BTW1}. This model has been widely studied and used by various
disciplines, including physics \cite{BTW1,Dhar,Dhar1995}, computer science \cite{BabaiGridTransience,Bjorner2,Bjorner1}, and economics \cite{BiggsChipFiring}. In 1990 Dhar generalized the BTW model into what is now known as the abelian sandpile model \cite{Dhar}. His results awakened the larger
mathematical community to analyze the algebraic, geometric, and
combinatorial structures arising from these dynamical
systems. For example, one of Dhar's early contributions to the theory was his realization that the recurrent states in the sandpile model of a graph form a group, now called the {\em sandpile group} of the graph. The sandpile group of a graph has since been studied by a large number of authors---see, e.g., \cite{BToump, Rotor, LevineTree, LevineLineGraphs,  ToumpakariThesis, ToumpakariTrees}. 

The sandpile group of a graph is a (usually strict) subset of a larger algebraic structure on the graph, called the {\em sandpile monoid} of the graph. Although Dhar introduced the sandpile monoid and the sandpile group of a graph at the same time, much of the subsequent work in the area has focused on the sandpile group. In addition, most work in the area so far has assumed the underlying graph to be undirected. 

The recent paper of Babai and Toumpakari \cite{BToump} forms the first published systematic study of the sandpile monoid of which we are aware. (Some of the work in \cite{BToump} dates back to the 2005 Ph.\ D.\  thesis of Toumpakari \cite{ToumpakariThesis}). In \cite{BToump} Babai and Toumpakari primarily consider directed graphs, and they build up the theory of sandpile monoids on directed graphs from scratch. In this paper we study the sandpile monoid of a finite directed graph, and we extend the existing theory in four main ways. We proceed as follows.

We begin in Section \ref{SecBasicDefs} by reviewing the basic definitions and observations for sandpile monoids and groups on directed graphs. We also recall the results of Babai and Toumpakari \cite{BToump} that connect certain algebraic aspects of the sandpile monoid to combinatorial aspects of the underlying graph that we will need for our development. 

In Section \ref{SubSecMaxSubgroups} we give our first main result, which is a combinatorial description of the maximal subgroups of a graph's sandpile monoid in terms of sandpile groups of easily-identified subgraphs. This is a nontrivial extension of the results of Babai and Toumpakari \cite{BToump} on the idempotent structure of the sandpile monoid, and is accomplished in Theorem \ref{ThmZeroMaxSubgroup} (for the maximal subgroup corresponding to the $0$ of the monoid) and Theorem \ref{ThmNonzeroIdemMaxSubgroup} (for the maximal subgroups corresponding to the nonzero idempotents of the monoid).

In Section \ref{SubSecTwoIdemMonoids} we point out sandpile results for undirected graphs that are really results for sandpile monoids on directed graphs that contain exactly two idempotents (Theorems \ref{ThmRecurrenceEquiv} and \ref{ThmEventuallyRecurrent}).

In Section \ref{SubSecMonoidsNotSPM} we give a new algebraic constraint that sandpile monoids must satisfy (Theorem \ref{ThmUPlusAEqU}) and we exhibit two infinite families of monoids that cannot be realized as sandpile monoids on any graph (Theorems \ref{ThmChainOfpElts} and \ref{ThmTwoIdemsNotSPM}). 

Finally, in Section \ref{SecStronglyRegular} we study a family of directed graphs that generalizes the family of undirected distance-regular graphs (Definition \ref{DefLocallyDistanceRegular}) and we give an explicit combinatorial description of the sandpile group identity for every graph in this family (Theorem \ref{ThmDistRegularId}). Our family includes distance-regular graphs as well as many other graphs of interest, including iterated wheels (Example \ref{ExIteratedWheels}), regular trees (Example \ref{ExRegularTrees}), and regular tournaments (Example \ref{ExRegularTournament}).

\section{Basic algebraic theory for directed graphs}
\label{SecBasicDefs}

In this section we lay out the basic definitions and algebraic theory for sandpiles on directed graphs and we review Babai and Toumpakari's structural results on the sandpile monoid \cite{BToump} that we will need for our development. Other good references include \cite{DharSurvey,Rotor}. We use the notation established in this section throughout the paper.

Let $\N=\{1,2,3,\ldots\}$. Let $\X$ denote a finite weakly connected directed graph (weakly connected means that $\X$ is connected if we replace all directed edges with undirected edges) with vertex set $\V$, directed edge set $\E$, and a distinguished vertex (called the {\em sink}) which is reachable from all other vertices. We allow $\X$ to have loops as well as any number of edges in either direction between pairs of vertices. $X$ denotes the directed graph induced by removing the sink from $\X$. Let $V$ and $E$ be the vertex and directed edge sets of $X$, respectively. We assume that $V$ is nonempty (so that $\X$ has at least one non-sink vertex). For $v\in V$, let $\outdeg(v)$ denote the out-degree of $v$. For $v,w\in \V$, if there exists a path (trivial or not) from $v$ to $w$ in $X$, we write $v\rightarrow w$. In particular, $v\rightarrow v$ for all $v\in \V$. 

In the {\em classical} case, $\X$ is undirected and $X$ is connected. 

A {\em sandpile configuration} $m$ on $\X$ (also called a {\em configuration} or a {\em state}) is a non-negative integer number of ``grains of sand" distributed, possibly unevenly, across the vertices of $V$. Given an ordering of the vertices of $V$, $m$ can be described by a length-$|V|$ vector with entries in $\N \cup \{0\}$, where the $k$th entry of the vector gives the number of grains on the $k$th vertex of $V$.

Let $m$ be a sandpile configuration on $\X$. The {\em support} of $m$, denoted $\supp(m)$, is the set of all $v\in V$ which hold at least one grain of sand.
In the state $m$, a vertex $v\in V$ is said to be {\em stable} if the number of grains on $v$ is less than $\outdeg(v)$, and $m$ itself is said to be stable if every vertex $v\in V$ is stable. If $v$ is unstable, we may {\em topple} $v$, sending one grain of sand along each of its outward edges to its neighbors. This may cause previously stable vertices to become unstable. The sink swallows all grains of sand it receives and never topples, even if it has outward edges. Because there is a path from every vertex to the sink, if we continue toppling unstable vertices we will eventually reach a stable configuration. A sequence of topplings that terminates in a stable configuration is called an {\em avalanche}. Furthermore, the order of topplings in an avalanche is irrelevant---every sandpile configuration has a unique stable configuration \cite{BToump,DharSurvey}.

\subsection{The sandpile monoid and sandpile group of a directed graph}

The {\em sandpile monoid} of (or on) $\X$, denoted $\M$, is the collection of all stable sandpile configurations on $\X$. Given two configurations, we may add them pointwise. Denote this operation by $+$. Even if $a$ and $b$ are stable configurations, $a+b$ may not be. For any states $a$ and $b$, denote the stabilization of $a+b$ by $a\oplus b$. The operation on $\M$ is $\oplus$, which is well-defined, associative, commutative, and has an identity \cite{BToump,DharSurvey}. The identity element is the empty configuration, which we denote by $0$. It is evident that
\begin{equation}
|\M| = \prod_{v\in V}\outdeg(v).
\label{EqSizeOfM}
\end{equation}

We follow Babai and Toumpakari \cite{BToump} in our definitions of recurrence and the sandpile group of $\X$. The development in \cite{Rotor} is also equivalent.

\begin{defn}
\label{DefAccessibleInMonoid}
Let $a,b\in \M$. We say that $a$ is {\em accessible} from $b$ (and that $b$ can access $a$) if there exists $x\in \M$ such that $b\oplus x=a$. For $a\in \M,$ we say that $a$ is {\em recurrent} if $a$ is accessible from every element $b\in \M$. If $a$ is not recurrent, $a$ is said to be {\em transient}.
\end{defn}

\begin{defn}Let $(S,\cdot)$ be a semigroup. If $A$ and $B$ are subsets of $S$, we define $A\cdot B=\{a\cdot b:a\in A \textup{ and }b\in B\}$. $I\subseteq S$ is said to be an {\em ideal} of $S$ if $I$ is nonempty, $I\cdot S \subseteq I$, and $S\cdot I \subseteq I$. 
Let $L$ be the intersection of all ideals of $S$. If $L$ is nonempty, then $L$ is said to be the {\em minimal ideal} of $S$, and if $L$ is empty then $S$ is said to have no minimal ideal. \end{defn}


If $S$ is a finite semigroup then the minimal ideal of $S$ exists, and if $S$ is also commutative then its minimal ideal is a group \cite{BToump}. 

\begin{defn}The {\em sandpile group} of (or on) $\X$, denoted $\G$, is the minimal ideal of $\M$. 
\end{defn}

By the above comment, $\G$ is a group.
As far as we know, Babai and Toumpakari were the first to give this definition of $\G$. This is a useful definition (or characterization) of $\G$ because it allows many of the basic properties of $\G$ to be derived using straightforward semigroup theory. For several examples, see \cite{BToump}. It is straightforward to show the following equivalent characterization of $\G$ \cite{BToump}.

\begin{thm}The elements of $\G$ are precisely the recurrent elements of $\M$.
\label{ThmRecIffMinimal}
\end{thm}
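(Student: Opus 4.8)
The plan is to exploit the two facts recalled just before the statement: that $\G$ is the minimal ideal of $\M$ (the intersection of all ideals of $\M$), and that, because $\M$ is finite and commutative, $\G$ is a group. Throughout I would write $e$ for the identity element of this group $\G$; note that $e$ is an idempotent of $\M$ but in general $e\neq 0$, so one must resist the temptation to use the monoid identity $0$ in its place.

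The crucial intermediate step is to identify $\G$ with a single principal ideal, namely
\[
\G = e\oplus\M = \{e\oplus x : x\in\M\}.
\]
The inclusion $e\oplus\M\subseteq\G$ holds because $e\in\G$ and $\G$, being an ideal, absorbs $\oplus$-multiplication by elements of $\M$. For the reverse inclusion, every $g\in\G$ satisfies $g=e\oplus g$ (since $e$ is the identity of the group $\G$ and $g\in\G$), and $e\oplus g$ visibly lies in $e\oplus\M$.

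With this description in hand, both directions of the theorem become short. If $a$ is recurrent, then in particular $a$ is accessible from $e$, so $a=e\oplus x$ for some $x\in\M$, whence $a\in e\oplus\M=\G$. Conversely, suppose $a\in\G$ and let $b\in\M$ be arbitrary; I must produce $x$ with $b\oplus x=a$. Since $\G$ is an ideal we have $b\oplus e\in\G$, and since $\G$ is a group there is an inverse $d\in\G$ of $b\oplus e$, that is, $(b\oplus e)\oplus d=e$. Taking $x=d\oplus a$ and using both $d=e\oplus d$ and $e\oplus a=a$ (each because the relevant element lies in the group $\G$), a one-line associativity computation gives $b\oplus x = b\oplus d\oplus a = (b\oplus e)\oplus d\oplus a = e\oplus a = a$. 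As $b$ was arbitrary, $a$ is accessible from every element of $\M$, so $a$ is recurrent.

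The main obstacle is really the bookkeeping behind the identification $\G=e\oplus\M$: one has to keep straight that the relevant identity is $e$, the identity of the minimal ideal, rather than the monoid identity $0$, and that the group inverse used in the second direction lives inside $\G$ rather than in $\M$ at large. Once this principal-ideal description of $\G$ is secured, everything reduces to elementary manipulations with the idempotent $e$ and the group structure, and I expect no genuine difficulty beyond that point.
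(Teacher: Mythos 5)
Your proof is correct. The paper itself gives no argument for this theorem---it is stated as "straightforward" and attributed to Babai--Toumpakari---so there is no in-paper proof to compare against; your route via the principal-ideal identification $\G = e\oplus\M$ (using that the minimal ideal of a finite commutative semigroup is a group with identity $e$, then inverting $b\oplus e$ inside $\G$ to show every element of $\G$ is accessible from an arbitrary $b$) is the standard semigroup-theoretic argument, and every step checks out against the paper's definitions of accessibility and recurrence.
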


%
%

If the sink of $\X$ has any outward edges, denote the graph obtained by deleting them by $\X'$. It is clear that the behavior of any sandpile configuration on $\X$ and $\X'$ is the same, and consequently that $\M\cong \SPM{\X'}$ and $\G\cong \SPG{\X'}$. So, why not just assume at the outset that the sink has no outward edges? It turns out that if $\X$ is classical (meaning that $\X$ is undirected and $X$ is connected), then up to isomorphism $\G$ does {\em not} depend on which vertex is chosen to be the sink. In fact, if $\X$ is Eulerian (meaning that there is a path from $v$ to $w$ in $\X$ for all $v,w\in \V$, and $\indegree(v)=\outdegree(v)$ for all $v\in \V$), then up to isomorphism $\G$ does not depend on which vertex is the sink \cite[Lemma 4.12]{Rotor}. Altering the location of the sink in these cases may, however, alter the structure of $\M$.

\subsection{Recurrence and the identity of $\G$}
\label{SubSecRecurrenceAndId}

\begin{notationa}
The totally saturated configuration in $\M$ (that is, the configuration on $\X$ in which every vertex holds as many grains of sand as possible without toppling) will play an important role in what follows. We denote it by $\MAX$. 
\end{notationa}

It is easy to see that $\MAX$ is accessible from any element $m\in \M$---just drop enough grains of sand on each vertex of $V$ to totally saturate them. Hence $\MAX\in\G$. 

The following theorem generalizes directly in its statement from the classical case \cite{DualGraphs} to the directed case. 


\begin{thm}\label{thmThreePartRecurrence}Let $a\in \M$. \begin{enumerate}
	\item $a$ is recurrent if and only if $a$ is accessible from $\MAX$.
\label{thmAccFromMax}
	\item Let $e=[\MAX - (\MAX\oplus\MAX)]\oplus \MAX$. Then $e$ is the identity of $\G$.
\label{ThmMakeTheId}
	\item Let $e$ denote the identity of $\G$. Then $a$ is recurrent if and only if $a\oplus e = a$.
\label{ThmMPlusEEqM}
\end{enumerate}
\end{thm}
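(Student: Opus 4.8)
The plan is to establish the three parts in order, leaning throughout on the observation just recorded that $\MAX$ is accessible from every element of $\M$, together with the characterization of $\G$ as the set of recurrent elements (Theorem \ref{ThmRecIffMinimal}) and the fact that $\MAX \in \G$. For part (1), the forward implication is immediate: if $a$ is recurrent then it is accessible from every element, in particular from $\MAX$. For the converse, suppose $\MAX \oplus x = a$ for some $x \in \M$. Given any $b \in \M$, accessibility of $\MAX$ from $b$ produces some $y$ with $b \oplus y = \MAX$, whence $b \oplus (y \oplus x) = \MAX \oplus x = a$; thus $a$ is accessible from every $b$ and so is recurrent. The same bookkeeping shows that accessibility is transitive, a fact I will reuse below.

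For part (2), I would first check that $e$ is well defined and recurrent. Writing $s = \MAX \oplus \MAX$, the configuration $s$ is stable, hence $s \le \MAX$ componentwise, so the pointwise difference $d = \MAX - s$ is a nonnegative---indeed stable---configuration and $e = d \oplus \MAX$. Since $e$ is visibly accessible from $\MAX$ (take $x = d$ in part (1)), we get $e \in \G$. The crux is the identity $\MAX \oplus e = \MAX$: using commutativity and associativity of $\oplus$,
\[
\MAX \oplus e = \MAX \oplus (d \oplus \MAX) = (\MAX \oplus \MAX) \oplus d = s \oplus d,
\]
and $s \oplus d$ is the stabilization of the pointwise sum $s + d = s + (\MAX - s) = \MAX$, which is already stable; hence $s \oplus d = \MAX$. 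Finally, since both $\MAX$ and $e$ lie in the group $\G$, the equation $\MAX \oplus e = \MAX$ may be cancelled in $\G$ to conclude that $e$ is its identity.

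Part (3) then follows quickly. If $a$ is recurrent then $a \in \G$, and since $e$ is the identity of $\G$ we get $a \oplus e = a$. Conversely, suppose $a \oplus e = a$. Rewriting this as $e \oplus a = a$ exhibits $a$ as accessible from $e$; and $e$, being recurrent, is accessible from $\MAX$ by part (1). Transitivity of accessibility then makes $a$ accessible from $\MAX$, so $a$ is recurrent by part (1) again.

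I expect the main obstacle to be the computation in part (2). Two points need care: that the pointwise difference $\MAX - (\MAX \oplus \MAX)$ is a legitimate nonnegative configuration, which rests on $\MAX \oplus \MAX$ being stable and on $\MAX$ being the componentwise-largest stable configuration; and that the reduction $s \oplus d = \MAX$ is justified by the uniqueness (confluence) of stabilization, so that stabilizing the pointwise sum $s + d$ is unambiguous and returns the already-stable $\MAX$. Once $\MAX \oplus e = \MAX$ is in hand, the group-cancellation step is routine, and the remaining parts are formal consequences of part (1) and the definition of the identity of $\G$.
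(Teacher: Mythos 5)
Your proof is correct and follows essentially the same route as the paper's: part (1) is argued identically, and part (2) rests on the same key computation, namely that $[\MAX - (\MAX\oplus\MAX)]\oplus(\MAX\oplus\MAX)$ is the stabilization of a pointwise sum that is literally $\MAX$ and already stable. The only cosmetic differences are that the paper concludes part (2) by showing $e\oplus e=e$ and invoking the uniqueness of idempotents in a group where you show $\MAX\oplus e=\MAX$ and cancel, and that in part (3) the paper deduces $a\in\G$ from $a=a\oplus e$ directly via the ideal property of $\G$ where you route through accessibility from $\MAX$ and part (1).
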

\begin{proof}Part \ref{thmAccFromMax}. If $a$ is recurrent, then it is accessible from any configuration. On the other hand, if $a$ is accessible from $\MAX$, write $a=\MAX\oplus m$ for some $m\in \M$. Let $b\in \M$. $\MAX$ is accessible from $b$, so we have $\MAX=b\oplus m'$ for some $m' \in \M$. Then $a=b\oplus m'\oplus m,$
so $a$ is accessible from $b$.

Part \ref{ThmMakeTheId}. $e$ is accessible from $\MAX$ by definition, so $e\in \G$. We show $e$ is idempotent. Let $\delta$ denote $\MAX$.
\begin{align*}
e\oplus e &=
\left([\delta - (\delta\oplus\delta)] \oplus \delta \right) \oplus \left(\delta \oplus [\delta - (\delta\oplus\delta)]\right) \\
&= \left( [\delta - (\delta\oplus\delta)] \oplus (\delta \oplus \delta) \right) \oplus [\delta - (\delta\oplus\delta)]\\
&= \delta \oplus [\delta-(\delta\oplus\delta)]= e,
\end{align*}
as $\left( [\delta - (\delta\oplus\delta)] \oplus (\delta \oplus \delta) \right) = \delta$. Since any group contains exactly one idempotent---its identity---$e$ is the identity of $\G$.

Part \ref{ThmMPlusEEqM}. If $a$ is recurrent, then $a\oplus e=a$ because $a\in \G$ and $e$ is the identity of $\G$. On the other hand, if $a=a\oplus e$, then $a$ is an element of $\G$ because $\G$ is an ideal.
\end{proof}

%

Although part \ref{ThmMakeTheId} of Theorem \ref{thmThreePartRecurrence} gives us an easy way to compute the identity $e$ of $\G$ for any given digraph $\X$, a description of $e$ in terms of the combinatorics of $\X$ can be difficult to obtain. For example, many questions concerning the combinatorial structure of the identity element on the undirected $m\times n$ grid 
with an augmented sink remain open \cite{Rotor}. In Section \ref{SecStronglyRegular} we give a combinatorial classification of $e$ for every graph in a family of directed graphs that generalizes the family of undirected distance-regular graphs.

Also, note that parts \ref{ThmMakeTheId} and \ref{ThmMPlusEEqM} of Theorem \ref{thmThreePartRecurrence} combine to provide a straightforward computational test for recurrence of a given state $a\in\M$---namely, if the identity $e$ of $\G$ is not already stored in memory, construct it, and then check whether $a\oplus e$ is $a$. There are tests which are generally computationally faster (in terms of the number of topplings needed), however, which are known as the {\em burning algorithm} for undirected $\X$ \cite{Dhar} and the {\em script algorithm} for directed $\X$ \cite{Speer}.

We also have the following corollary of Theorem \ref{thmThreePartRecurrence}.

\begin{cor}Let $e$ denote the identity of $\G$. Then $a\in\M$ is recurrent if and only if $a$ is accessible from $e$.\label{CorMRecurrentIffAccFromSPId}
\end{cor}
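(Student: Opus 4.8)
The plan is to derive this corollary directly from Theorem \ref{thmThreePartRecurrence}, establishing the biconditional in two directions. The claim is that for $a \in \M$, $a$ is recurrent if and only if $a$ is accessible from $e$, where $e$ is the identity of $\G$. First I would handle the forward direction: suppose $a$ is recurrent. By definition, $a$ is accessible from every element of $\M$, and since $e \in \G \subseteq \M$, in particular $a$ is accessible from $e$. This direction is immediate and requires no real work.

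For the reverse direction, suppose $a$ is accessible from $e$, so we may write $a = e \oplus x$ for some $x \in \M$. The key observation is that $e$ is the identity of $\G$, and $\G$ is an ideal of $\M$ by definition (it is the minimal ideal). Since $e \in \G$ and $x \in \M$, the ideal property $\G \oplus \M \subseteq \G$ forces $a = e \oplus x \in \G$. By Theorem \ref{ThmRecIffMinimal}, the elements of $\G$ are precisely the recurrent elements of $\M$, so $a$ is recurrent. This completes the proof.

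I do not anticipate a genuine obstacle here, since the statement is a short corollary whose content is essentially bookkeeping about the ideal structure already established. The only subtlety worth flagging is ensuring that the reverse direction invokes the ideal property of $\G$ rather than attempting to argue via the idempotence condition $a \oplus e = a$ from part \ref{ThmMPlusEEqM} of Theorem \ref{thmThreePartRecurrence}: writing $a = e \oplus x$ does not by itself yield $a \oplus e = a$ without knowing $e$ is idempotent and using commutativity and associativity, so the cleanest route is the direct ideal argument. Alternatively, one could observe $a \oplus e = (e \oplus x) \oplus e = (e \oplus e) \oplus x = e \oplus x = a$ using idempotence of $e$, and then apply part \ref{ThmMPlorEEqM} — but the ideal argument is more economical, so I would present that.
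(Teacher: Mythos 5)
Your proof is correct, but it takes a different route from the paper's. The paper proves the reverse direction by the very computation you flag as the ``alternative'': from $a=e\oplus k$ it derives $a\oplus e=e\oplus k\oplus e=e\oplus e\oplus k=e\oplus k=a$ using idempotence of $e$, and then concludes recurrence from part \ref{ThmMPlusEEqM} of Theorem \ref{thmThreePartRecurrence}. You instead invoke the ideal property $\G\oplus\M\subseteq\G$ to place $a=e\oplus x$ directly in $\G$, and then appeal to Theorem \ref{ThmRecIffMinimal} to identify $\G$ with the recurrent elements. Both arguments are sound and short; yours is arguably the more structural one, since it needs only that $e$ lies in the minimal ideal and never uses that $e$ is idempotent, whereas the paper's version stays entirely within the accessibility/idempotence framework of Theorem \ref{thmThreePartRecurrence} and does not need the characterization of $\G$ as the set of recurrent elements. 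Your side remark that $a=e\oplus x$ does not \emph{by itself} give $a\oplus e=a$ without idempotence, commutativity, and associativity is accurate, and the paper's proof does indeed use all three.
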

\begin{proof}If $a$ is recurrent, then $a$ is accessible from every element of $\M$. On the other hand, if $a=e\oplus k$ for some $k\in \M$, then $a\oplus e=e\oplus k\oplus e=e\oplus e\oplus k=e\oplus k=a$, and the result follows from part \ref{ThmMPlusEEqM} of Theorem \ref{thmThreePartRecurrence}.
\end{proof}

\subsection{Cycles, idempotents, and structure}
\label{SecStructure}

We now describe some of the results of Babai and Toumpakari on the relationship between the algebra of $\M$ and the combinatorics of $\X$ \cite{BToump,ToumpakariThesis}.




\label{SubSecCyclesAndAcc}

In this paper we only consider directed cycles.

\begin{defn}
\label{DefCycle}
A {\em cycle} in a directed graph is a directed path of length 1 or greater from a vertex of the graph to itself.
\end{defn}

\begin{defn}
\label{DefAccessibleInGraph}
 A cycle in $X$ is {\em accessible} from a vertex $v\in \V$ if there is a path from $v$ to a vertex in the cycle using only edges in $E$. In particular, no cycle in $X$ is accessible from the sink.
\end{defn}

We are now using the term {\em accessible} in two contexts: first, in terms of accessibility in the monoid $\M$ (Definition \ref{DefAccessibleInMonoid}), and second, in terms of accessibility in the underlying digraph $\X$ (Definition \ref{DefAccessibleInGraph}). The following theorem shows how these concepts are related.

\begin{thm}\cite{BToump,ToumpakariThesis}. \label{BToumpMainThm1}Let $a,b\in \M$.
\begin{enumerate}
	\item $0$ is accessible from $a$ if and only if $a$ has no grains of sand on any vertex from which a cycle in $X$ is accessible.
\label{LemZeroAccCycle} \label{LemAcyclicAccZero}

	\item If there is a cycle of $X$ on which $a$ contains a grain of sand but on which $b$ does not, then $b$ is not accessible from $a$.
\label{LemCycleGrainAccAB}
	
	\item If $g\in \G$, then $g$ contains at least one grain of sand on every cycle of $X$.
\label{CorGInGOneGrainEveryCycle}
	
\item The following are equivalent.
\label{ThmMGIff0Rec}\label{thmMGAcyclic}
\begin{itemize}
	\item \label{littlelemmapt1} $0$ is recurrent.
	\item \label{littlelemmapt2} $\G=\M$.
	\item \label{littlelemmapt3} $\M$ contains exactly one idempotent.
	\item \label{littlelemmapt4} $X$ contains no cycles.
\end{itemize}

\end{enumerate}
\end{thm}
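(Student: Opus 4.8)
The plan is to prove the four-way equivalence in part (4) by establishing a cycle of implications that routes through the already-proven parts (1)--(3) of this theorem together with the structural results on $\G$ and $\MAX$ from Section \ref{SubSecRecurrenceAndId}. I would label the four statements (i) $0$ is recurrent, (ii) $\G=\M$, (iii) $\M$ contains exactly one idempotent, and (iv) $X$ contains no cycles, and prove the chain (iv) $\Rightarrow$ (i) $\Rightarrow$ (ii) $\Rightarrow$ (iii) $\Rightarrow$ (iv).

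\textbf{Key steps.} First, (iv) $\Rightarrow$ (i): if $X$ contains no cycles, then $0$ has no grains on any vertex from which a cycle is accessible (vacuously, since $0$ has no grains at all, but more to the point no cycles exist), so by part (1) applied with $a=\MAX$, the state $0$ is accessible from $\MAX$; by Theorem \ref{thmThreePartRecurrence}(1) this makes $0$ recurrent. Actually the cleanest route is: if $X$ is acyclic then for \emph{every} $a\in\M$, $0$ is accessible from $a$ by part (1), so in particular $0$ is accessible from $\MAX$, hence recurrent. Next, (i) $\Rightarrow$ (ii): if $0$ is recurrent then $0\in\G$; since $\G$ is an ideal and $0$ is the identity of $\M$, for any $m\in\M$ we have $m = m\oplus 0 \in \M\oplus\G\subseteq\G$, so $\M\subseteq\G$, giving $\G=\M$. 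Then (ii) $\Rightarrow$ (iii): $\G$ is a group, a group contains exactly one idempotent (its identity), so if $\M=\G$ then $\M$ has exactly one idempotent. Finally, (iii) $\Rightarrow$ (iv): this is the contrapositive-friendly direction and the crux of the argument.

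\textbf{The main obstacle} is the implication (iii) $\Rightarrow$ (iv), equivalently showing that if $X$ \emph{does} contain a cycle then $\M$ has at least two idempotents. The identity $e$ of $\G$ is always an idempotent, so I need to produce a second one. The natural candidate for a distinct idempotent is $0$ itself, so it suffices to show that when $X$ contains a cycle, $0\notin\G$, i.e. $0$ is not recurrent. Here I would invoke part (3): every $g\in\G$ carries at least one grain on every cycle of $X$. If $X$ contains a cycle $C$, then any recurrent state has a grain somewhere on $C$, but $0$ has no grains anywhere, so $0\notin\G$; hence $0\neq e$ and $\M$ has at least two distinct idempotents, contradicting (iii). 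This closes the cycle of implications.

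One point to verify carefully in the write-up is that $0$ is genuinely idempotent in $\M$ (immediate, since $0\oplus 0 = 0$) so that it really does count as a second idempotent distinct from $e$ whenever $0\neq e$; and that the equivalence of ``$0$ recurrent'' with ``$0\in\G$'' is exactly Theorem \ref{ThmRecIffMinimal}. With parts (1)--(3) of the theorem and Theorem \ref{ThmRecIffMinimal} in hand, each implication is short, and the only step requiring combinatorial input about cycles is the final (iii) $\Rightarrow$ (iv) via part (3).
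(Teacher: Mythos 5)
Your derivation of part (4) is logically sound: the chain (iv) $\Rightarrow$ (i) $\Rightarrow$ (ii) $\Rightarrow$ (iii) $\Rightarrow$ (iv) works, each link is justified by the tools you cite (part (1) for the vacuous accessibility of $0$ when $X$ is acyclic, Theorem \ref{ThmRecIffMinimal} and the ideal property of $\G$ for (i) $\Rightarrow$ (ii), uniqueness of idempotents in a group for (ii) $\Rightarrow$ (iii), and part (3) plus the idempotency of $0$ for (iii) $\Rightarrow$ (iv)). For what it is worth, the paper does not spell out this argument at all --- it attributes the whole theorem to Babai--Toumpakari and, for parts (3) and (4), simply refers the reader to \cite{BToump} --- so on part (4) you have actually supplied more than the paper does, and your route is the natural one.

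The gap is that the statement you were asked to prove has four parts, and you prove only the last one, taking (1)--(3) as ``already-proven'' inputs. They are not proven elsewhere in the paper; they are part of this very theorem. In particular your crucial step (iii) $\Rightarrow$ (iv) leans on part (3), which you never establish. The paper at least records the one combinatorial mechanism that drives parts (1) and (2): whenever a vertex lying on a cycle of $X$ topples, it sends a grain to its successor on that cycle, so sand that has reached a cycle can never entirely leave it. That observation gives the forward direction of (1) (a grain on a vertex from which a cycle is accessible must eventually be pushed onto the cycle and then can never be cleared, so $0$ is unreachable) and gives (2) directly. Part (3) then follows from (2) together with facts you already use elsewhere: every cycle contains a vertex of out-degree at least $2$ (some cycle vertex must have an edge leaving the cycle toward the sink), so $\MAX$ has a grain on every cycle; every $g\in\G$ is accessible from $\MAX$ by Theorem \ref{thmThreePartRecurrence}; hence by (2) every $g\in\G$ has a grain on every cycle. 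The reverse direction of (1) also needs an argument (if no cycle is accessible from any vertex of $\supp(a)$, one can drain $a$ to $0$ by adding sand appropriately). Without at least the toppling observation and these deductions, your write-up proves only a quarter of the stated theorem.
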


Note that whenever a vertex on a cycle in $X$ topples during an avalanche, it will leave at least one grain of sand on a vertex somewhere in that cycle, establishing the forward direction of part \ref{LemZeroAccCycle} and also part \ref{LemCycleGrainAccAB} of Theorem \ref{BToumpMainThm1}. For proofs of the remaining parts, see \cite{BToump}.

\label{SubSecIdemStruct}

\begin{notationa} For $a\in \M$, let $e_a$ denote the (unique) idempotent of $\M$ obtained by adding $a$ to itself and stabilizing repeatedly. 
\end{notationa}

Let us quickly establish that $e_a$ is well-defined.

\begin{lem}
\label{LemIdempotentClass}
Let $(S,\cdot)$ be a finite semigroup, and let $a\in S$. Then there exists $k\in \N$ for which $a^k$ is idempotent. Furthermore, there is only one idempotent in the list $a,a^2,a^3,\ldots$, and it appears infinitely many times in this list.
\end{lem}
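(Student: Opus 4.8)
The plan is to prove both claims using the classical pigeonhole argument for finite semigroups, together with the fact that a commutative-free version already works in any finite semigroup. First I would argue that the sequence $a, a^2, a^3, \ldots$ cannot consist of distinct elements: since $S$ is finite, there must exist positive integers $i < j$ with $a^i = a^j$. Setting $d = j - i$ (the eventual period) I would show that the powers of $a$ are eventually periodic, i.e. $a^{n} = a^{n+d}$ for all $n \geq i$. This is the standard ``the sequence enters a cycle'' observation, and it is the structural backbone for everything that follows.

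Next, to produce an idempotent, I would look for a power $a^k$ with $n \le k$ that is divisible by $d$; concretely, any multiple of $d$ that is at least $i$ works. The key computation is that if $k \geq i$ and $d \mid k$, then $a^k \cdot a^k = a^{2k} = a^k$, because adding $d$ (and hence any multiple of $d$) to an exponent that is already $\geq i$ leaves the element unchanged. To see $a^{2k}=a^k$, note $2k = k + k$ and $k$ is a multiple of $d$, so iterating the relation $a^{m}=a^{m+d}$ a total of $k/d$ times starting from $a^k$ (whose exponent exceeds $i$) yields $a^{2k}=a^k$. Choosing, for instance, $k$ to be any common multiple of $d$ that exceeds $i$ gives the desired idempotent, establishing existence.

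For uniqueness, I would suppose $a^r$ and $a^s$ are both idempotent with $r \le s$ and derive $a^r = a^s$. The clean way is to use absorption: if $a^r$ is idempotent then $a^r = a^{r}\cdot a^{r} = a^{2r} = \cdots = a^{tr}$ for every $t \in \N$, so $a^r$ equals $a^{tr}$ for all large $t$; similarly $a^s = a^{ts}$ for all $t$. Picking a common exponent that is simultaneously a multiple of $r$ and of $s$ (for example $rs$) forces $a^r = a^{rs} = a^s$. Thus there is exactly one idempotent among the powers of $a$. That it appears infinitely often is then immediate from eventual periodicity: once $a^k$ is idempotent, $a^{k} = a^{k+d} = a^{k+2d} = \cdots$, so the same idempotent element recurs at every exponent congruent to $k$ modulo $d$ (indeed at every multiple of $d$ beyond the threshold), giving infinitely many appearances.

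I do not anticipate a genuine obstacle here, as this is a foundational lemma; the one place requiring care is bookkeeping with the exponents, specifically making sure the chosen $k$ is simultaneously large enough (at least $i$, so periodicity applies) and a multiple of $d$ (so that the idempotent relation closes up). The cleanest route sidesteps even this by simply invoking that in the finite set $\{a, a^2, a^3, \ldots\}$ the subsemigroup generated by $a$ is a finite cyclic semigroup, whose structure (a ``tail'' followed by a ``cycle'' containing a unique idempotent) is completely standard; but since the paper presumably wants a self-contained argument, I would write out the pigeonhole-plus-absorption version above rather than cite the cyclic-semigroup classification.
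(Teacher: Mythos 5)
Your proof is correct and follows essentially the same route as the paper's: pigeonhole gives $a^m=a^n$, an idempotent is produced as a suitable multiple-of-the-period power beyond the threshold (the paper's explicit choice is $z=(a^m)^{n-m}$, i.e.\ $k=m(n-m)$), and your uniqueness step $a^r=a^{rs}=a^s$ is the paper's computation $a^k=(a^k)^j=(a^j)^k=a^j$ in different notation.
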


\begin{proof}
Consider
$
a,a^2,a^3,\ldots,a^{|S|+1}.
$
These cannot all be distinct, so we have $a^m=a^n$ for some $1\leq m<n\leq |S|+1.$ Thus we can write
\begin{equation}
\label{EqIdempotentConvergence}
a^m=a^m\cdot a^{n-m}.
\end{equation}
Now consider
$
z=(a^m)^{n-m}.
$
Applying \eqref{EqIdempotentConvergence} $m$ times to $z$ expands $z$ into $z\cdot z$, meaning that $z$ is idempotent:
\[
z=(a^m)^{n-m}
=(a^m)^{n-m}\cdot (a^{n-m})^m = z\cdot z.
\]
To finish the proof, suppose $a^k$ and $a^j$ are both idempotent for some $k,j\in \N$. Then
$
a^k=(a^k)^{j}=(a^j)^{k}=a^j,
$
and since $a^k$ is idempotent, we have
$
a^k=a^{2k}=a^{3k}=\cdots.
$
\end{proof}

Since outward edges from the sink in $\X$ do not affect the structure of $\M$, we assume for the rest of the section that there are no outward edges from the sink in $\X$, as it will make some of the following definitions and notation easier to state. 

\begin{defn} 
\label{DefClosure}
The {\em closure} of a subset $W\subseteq V$, denoted $\cl(W)$, is given by $$\cl(W)=\{z\in V:\exists w\in W, w\rightarrow z\}.$$
In particular, no closure contains the sink (because $V$ consists of the non-sink vertices of $\X$), and for all $W\subseteq V$ we have $W\subseteq \cl(W)$. 
\end{defn}

\begin{notationa}
Given a set of non-sink vertices $S\subseteq V$, let $\mi(S)$ denote the subgraph of $\X$ induced by $\cl(S)\cup\{\textup{sink}\}$. That is, $\mi(S)$ is the graph whose vertex set is $\cl(S)\cup\{\textup{sink}\}$ and whose edge set consists of all edges in $\X$ which are directed outward from any vertex in $\mi(S)$ (including edges pointing to the sink). 
\end{notationa}

Let $S\subseteq V$ be nonempty. Then $\mi(S)$ is a subgraph of $\X$ which is a legal graph for having a sandpile monoid, in the sense that $\mi(S)$ has at least one non-sink vertex and the sink of $\mi(S)$ is accessible from each of its non-sink vertices. Since toppling a state on $\X$ whose support lies in $\mi(S)$ is the same as toppling the same state on $\mi(S)$, we can view the sandpile monoid and sandpile group of $\mi(S)$ as a submonoid and subgroup, respectively, of $\M$ by the natural inclusions. 

\begin{thm}\cite{BToump, ToumpakariThesis}\label{thmEventuallyRecurrent} If $m\in \M$ with $m\neq 0$, then the sequence
\[
m, m\oplus m, m\oplus m\oplus m,\ldots
\]
eventually reaches a recurrent state in the sandpile monoid on $\mi(\supp(m))$. Furthermore, if $e\in \M$ is a nonzero idempotent, then $e$ is recurrent in the sandpile monoid on $\mi(\supp(e))$---that is, $e$ is the identity of the sandpile group on $\mi(\supp(e))$.
\label{CorRecurrentIdemOnItsISupp}
\end{thm}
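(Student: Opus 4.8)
The plan is to push the whole statement into the sandpile monoid $N:=\SPM{\mi(\supp(m))}$ and to show that the idempotent obtained by iterating $m$ is the identity of $\SPG{\mi(\supp(m))}$. First I would note that every term of the sequence $m,m\oplus m,\dots$ is supported inside $\cl(\supp(m))$: toppling only pushes grains forward along directed edges, so grains initially on $\supp(m)$ can reach only vertices reachable from $\supp(m)$. Thus every term lies in $N$, and---since toppling a state supported in $\mi(\supp(m))$ is the same whether performed in $\X$ or in $\mi(\supp(m))$---the sequence computed in $N$ agrees with the one computed in $\M$. From here on I work inside $N$, writing $\MAX_N$ for its saturated state (which is recurrent in $N$ and so lies in $\SPG{\mi(\supp(m))}$) and $e_K$ for the identity of $\SPG{\mi(\supp(m))}$.

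By Lemma \ref{LemIdempotentClass} applied to $m$ in the finite commutative semigroup $N$, the sequence contains the idempotent $e_m$. Since $\SPG{\mi(\supp(m))}$ is an ideal of $N$, once a term of the sequence is recurrent in $N$ so are all later terms; as $e_m$ is itself a term, it suffices to prove that $e_m$ is recurrent in $N$. This also yields the final sentence: taking $m=e$ for a nonzero idempotent $e$ makes the sequence constantly $e$, so $e=e_e$ is recurrent in $\SPM{\mi(\supp(e))}$, and being an idempotent in the group $\SPG{\mi(\supp(e))}$---which has a unique idempotent---$e$ must be that group's identity.

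It remains to prove $e_m$ is recurrent in $N$. By part \ref{thmAccFromMax} of Theorem \ref{thmThreePartRecurrence} it is enough to show $e_m$ is accessible from $\MAX_N$, and I would obtain this from the defining feature of $\mi(\supp(m))$: every non-sink vertex is reachable from $\supp(m)$. For large $k$ the pile consisting of $k$ pointwise copies of $m$ forces each vertex of $\supp(m)$ to topple many times, and---propagating along directed paths issuing from $\supp(m)$, which by construction meet every vertex of $\mi(\supp(m))$---every non-sink vertex topples at least once as this pile stabilizes to the $k$th term of the sequence. A stable configuration produced by a pile in which every non-sink vertex topples is recurrent (this is the content of the burning/script characterization of recurrence for directed graphs, cf.\ \cite{BToump,Speer}); hence for all large $k$ the $k$th term is recurrent, and in particular so is $e_m$.

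The step I expect to be the crux is precisely this recurrence of the limiting idempotent. Purely algebraic manipulation supplies only one of the two needed inequalities: since $\MAX_N$ is accessible from $e_m$ we may write $\MAX_N=e_m\oplus y$, whence $\MAX_N\oplus e_m=\MAX_N$, and adding the inverse of $\MAX_N$ inside the group $\SPG{\mi(\supp(m))}$ gives $e_K\oplus e_m=e_K$; that is, $e_K\le e_m$ in the semilattice of idempotents, which holds automatically because $e_K$ is the minimum idempotent. The reverse relation $e_m\le e_K$---equivalently $e_m=e_K$, equivalently $e_m$ recurrent---cannot be deduced from semigroup identities alone and genuinely requires the reachability geometry of $\mi(\supp(m))$. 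The care points will be making ``every non-sink vertex topples for $k$ large'' precise (a monotonicity-in-$k$ induction along directed paths out of $\supp(m)$) and invoking the correct directed recurrence criterion to pass from universal toppling to recurrence.
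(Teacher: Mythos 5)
This theorem is not proved in the paper at all --- it is imported verbatim from \cite{BToump} and \cite{ToumpakariThesis} --- so there is no in-paper argument to compare yours against; I can only assess your proposal on its own terms, and it is essentially sound. Your reduction to the submonoid $\SPM{\mi(\supp(m))}$ is exactly the identification the paper itself makes (toppling a state supported in $\cl(\supp(m))$ is the same in $\X$ as in $\mi(\supp(m))$, since $\mi$ keeps all outward edges and $\cl$ is closed under out-neighbors), your use of Lemma \ref{LemIdempotentClass} to reduce everything to the recurrence of $e_m$ is correct, and your diagnosis that the semilattice inequality $e_K\le e_m$ is free while the reverse genuinely needs the reachability of every vertex of $\mi(\supp(m))$ from $\supp(m)$ is exactly right. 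The one place you should be more careful is the lemma you lean on at the crux: ``a stable configuration produced by a pile in which every non-sink vertex topples is recurrent.'' That statement is true for digraphs with a global sink, but it is \emph{not} literally the burning test of \cite{Dhar} (which is for undirected/Eulerian graphs) nor the script algorithm of \cite{Speer} (which requires each vertex to fire a prescribed number of times given by the period vector, not merely at least once); the correct reference, already in this paper's bibliography, is \cite[Lemma 3.5]{Rotor}. Since your counting argument actually shows every vertex of $\mi(\supp(m))$ topples arbitrarily many times as $k\to\infty$ (each $v\in\supp(m)$ has net outflow at least one grain per toppling because it has a path to the sink, so it topples roughly $k/\outdeg(v)$ times, and this propagates along directed paths with only bounded loss per edge), you comfortably clear even the stronger hypotheses one might worry about, but you should cite the lemma you are actually using and, ideally, sketch its proof rather than gesture at ``the burning/script characterization,'' which is a different (and in the directed case subtler) statement.
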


Note that Lemma \ref{LemIdempotentClass} and Theorem \ref{thmEventuallyRecurrent} together imply that if $m\in \M$ with $m\neq 0$, then the sequence $$m, m\oplus m, m\oplus m\oplus m,\ldots$$ eventually reaches the identity of the sandpile group of $\mi(\supp(m))$.

\begin{defa}
\cite[Section 2.2]{ToumpakariThesis}
Let $W\subseteq V$. $W$ is said to be strongly connected if, for all $w,v \in W$, $w \rightarrow v$. $W$ is a {\em strongly connected component} of $\X$ if $W$ is strongly connected and is not contained in any other strongly connected subset of $V$. A strongly connected component of $\X$ is {\em cyclic} if it contains a cycle and {\em acyclic} otherwise. A vertex $v\in V$ is {\em acyclic} if it does not belong to a cycle.
\end{defa}

Therefore, an acyclic strongly connected component consists of a single, loopless, non-sink vertex, and we consider the sink not to be an element of any strongly connected component. Note that a vertex $v\in V$ is acyclic if and only if $\{v\}$ is a strongly connected component of $\X$ and $v$ is loopless. We will call the cyclic strongly connected components of $\X$ the {\em cyclic strong components of $\X$} for short. Note that a cyclic strong component of $\X$ might contain just one vertex. 
Note that the adjectives {\em normal} and {\em abnormal} were used instead of {\em cyclic} and {\em acyclic} in \cite{ToumpakariThesis}.

The cyclic strong components of $\X$ form a partially ordered set, where $C_1\leq C_2$ if and only if there is a path in $X$ from some vertex (and hence every vertex) in $C_1$ to some vertex (and hence every vertex) in $C_2$. In this case we say $C_2$ is accessible from $C_1$, and we write $C_1\rightarrow C_2$, so $C_1\leq C_2\iff C_1\rightarrow C_2$.

\begin{defa}Let $P$ be a partially ordered set and let $F\subseteq P$. $F$ is called a {\em filter} of $P$ (or on $P$) if, for all $x,y\in P$, $x\in F$ and $y\geq x$ implies $y\in F$. 
\end{defa}

\begin{rmka}Our use of the word {\em filter} differs slightly from the normal order theoretic usage. In particular, we do not require our filters to have the property that for all $x,y \in F$, there exists $z\in F$ such that $z\leq x$ and $z\leq y$.
\end{rmka}



The following theorem, which is part of the main result of \cite{BToump}, tells us how to read the idempotent structure of $\M$ from the cycle structure of $\X$.

\begin{thm}
\label{ThmBijection}
The idempotents of $\M$ are in bijective correspondence with the filters on the set of cyclic strong components of $\X$. Specifically, let $E$ denote the set of idempotents of $\M$ and let $Q$ denote the set of filters on the cyclic strong components of $\X$. Let $e\in E$. Define $$F:E\rightarrow Q$$ by $F(0)=\varnothing$ and, if $e\neq 0$, then $F(e)$ is the set of cyclic strong components of $\X$ contained in $\mi(\supp(e))$. In other words, if $C$ is a cyclic strong component of $\X$, then $C\in F(e) \iff C\subseteq \mi(\supp(e))$. Then $F$ is a bijection.
\end{thm}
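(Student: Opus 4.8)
The plan is to exhibit an explicit two-sided inverse for $F$. For a nonempty filter $\Phi$ on the cyclic strong components, let $W_\Phi=\bigcup_{C\in\Phi}C$ denote the union of their vertex sets, and define $\Psi(\Phi)$ to be the identity of the sandpile group $\SPG{\mi(W_\Phi)}$, regarded as an idempotent of $\M$ via the natural inclusion; set $\Psi(\varnothing)=0$. The whole result then reduces to three facts: (a) $F(e)$ is genuinely a filter; (b) $F(\Psi(\Phi))=\Phi$; and (c) $\Psi(F(e))=e$. A useful combinatorial lemma to record first is that the cyclic strong components contained in $\cl(W_\Phi)$ are exactly those in $\Phi$: the inclusion $\Phi\subseteq\{C:C\subseteq\cl(W_\Phi)\}$ is immediate, and the reverse uses the filter property, since $C\subseteq\cl(W_\Phi)$ forces some $C'\in\Phi$ to reach $C$, whence $C\geq C'$ and so $C\in\Phi$. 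The same style of argument gives (a): if $C_1\in F(e)$ and $C_2\geq C_1$, then composing a path from $\supp(e)$ into $C_1$ with a path from $C_1$ into $C_2$ shows $C_2\subseteq\cl(\supp(e))$, so $F(e)$ is upward closed.

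For (b), Theorem \ref{thmEventuallyRecurrent} identifies $\Psi(\Phi)$ as the identity of $\SPG{\mi(W_\Phi)}$, and part \ref{CorGInGOneGrainEveryCycle} of Theorem \ref{BToumpMainThm1}, applied to the graph $\mi(W_\Phi)$, says this identity carries a grain on every cycle; in particular its support meets every $C\in\Phi$, so each such $C$ lies in $\cl(\supp(\Psi(\Phi)))$ and hence in $F(\Psi(\Phi))$. Conversely $\supp(\Psi(\Phi))\subseteq\cl(W_\Phi)$, so by the lemma above $F(\Psi(\Phi))\subseteq\Phi$, giving equality.

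The crux is (c), and the main obstacle is the following support lemma: for every idempotent $e$, each vertex of $\supp(e)$ is reachable from a cyclic strong component contained in $\cl(\supp(e))$; equivalently, with $U=\cl(\supp(e))$, the configuration $e$ places no grain on any vertex of $U$ that is not reachable within $U$ from a cycle. Granting this, $\cl(\supp(e))=\cl(W_{F(e)})$, hence $\mi(\supp(e))=\mi(W_{F(e)})$, and since $e$ is the identity of $\SPG{\mi(\supp(e))}$ by Theorem \ref{thmEventuallyRecurrent}, we obtain $e=\Psi(F(e))$, which together with (b) makes $\Psi$ a two-sided inverse of $F$.

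To prove the support lemma, let $B\subseteq U$ be the set of vertices not reachable from any cycle lying in $U$. Every vertex of $B$ is acyclic, $B$ is closed under taking in-neighbors that lie in $U$, and every vertex outside $U$ stays empty throughout the computation of $e\oplus e$ because $U$ is closed under out-edges. Ordering $B$ topologically, I would show by induction that no vertex of $B$ topples during the stabilization of $e+e$: a vertex $b\in B$ can receive grains only from earlier vertices of $B$ (its out-of-$U$ in-neighbors being inert), and those do not topple, so $b$ receives nothing. A vertex that receives no grains passes from its initial count $2e(b)$ down to its final value $(e\oplus e)(b)=e(b)$ by toppling some $t\geq 0$ times, so that $e(b)=2e(b)-t\cdot\outdeg(b)$, i.e. $e(b)=t\cdot\outdeg(b)<\outdeg(b)$, forcing $t=0$ and $e(b)=0$. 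This modular argument, powered by the idempotency $e\oplus e=e$, is the essential point; note it genuinely requires idempotency rather than mere recurrence, since recurrent non-idempotent states can hold grains on such upstream vertices. Combining (a)--(c) shows that $F$ is a bijection with inverse $\Psi$.
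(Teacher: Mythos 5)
Your argument is correct, but there is nothing in the paper to compare it against: Theorem \ref{ThmBijection} is imported verbatim from Babai and Toumpakari \cite{BToump} with no proof given, and the one ingredient the paper does remark on---that an idempotent $e$ holds no sand on vertices not reachable from the cyclic strong components meeting $\supp(e)$, which is precisely your ``support lemma''---is waved off in Remark \ref{RmkNoInitialAbnormalForIdem} as ``straightforward to show,'' again with only a citation. What you have produced is therefore a self-contained derivation of the bijection from the two facts the paper does state, namely part \ref{CorGInGOneGrainEveryCycle} of Theorem \ref{BToumpMainThm1} and Theorem \ref{thmEventuallyRecurrent}. The skeleton is sound: the explicit inverse $\Phi\mapsto$ identity of $\SPG{\mi(W_\Phi)}$, the observation that the cyclic strong components inside $\cl(W_\Phi)$ are exactly those of the filter $\Phi$, and the reduction of $\Psi(F(e))=e$ to the identity $\cl(\supp(e))=\cl(W_{F(e)})$ all check out. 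The conservation argument at the core---a vertex $b$ of $B$ receives nothing, so $e(b)=2e(b)-t\outdeg(b)$, whence $e(b)=t\outdeg(b)<\outdeg(b)$ forces $t=0$ and $e(b)=0$---is the right mechanism, and you correctly flag that it is idempotency, not mere recurrence, doing the work. Two points deserve one explicit sentence each rather than being left implicit: the induced subgraph on $B$ is acyclic (any cycle of $X$ lying in $B$ would lie in $U$ and make its own vertices reachable from a cycle in $U$), which is what licenses the topological order and rules out loops at vertices of $B$; and vertices outside $U=\cl(\supp(e))$ never topple because $U$ is closed under out-edges, so a vertex outside $U$ can receive grains only from other vertices outside $U$, all of which begin empty. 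With those spelled out, the induction along the topological order is airtight and $F$ and $\Psi$ are mutually inverse.
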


\begin{rmk}
\label{RmkNoInitialAbnormalForIdem}
Thus, given a nonempty filter $q=\{C_1,C_2\ldots,C_n\}$ on the set of cyclic strong components of $\X$, there is exactly one nonzero idempotent $e$ of $\M$ whose support intersects each of the $C_1,C_2,\ldots,C_n$ and none of the other cyclic strong components of $\X$. It is straightforward to show that if an acyclic vertex $v$ is not accessible from at least one of the $C_i\in q$, then $v\notin\supp(e)$ 
\cite{BToump}. 
(However, we note that in general, acyclic vertices which are accessible from the $C_i$ may be contained in $\supp(e)$---see, e.g., the idempotents $e_2$, $e_3$, $e_4$, and $e_5$ in Example \ref{TheExample}.) Thus by Theorem \ref{thmEventuallyRecurrent}, we see that $e$ can be found by placing a grain of sand on any vertex of each $C_i$ and adding the resulting state to itself and stabilizing over and over until an idempotent is reached. 
However, by Theorem \ref{CorRecurrentIdemOnItsISupp}, $e$ is the the identity of the sandpile group of the graph $Y=\mi(\supp(e))=\cl(\cup_{C\in q}C) \cup \{\textup{sink}\}$, so a better way to find $e$ is to use part \ref{ThmMakeTheId} of Theorem \ref{thmThreePartRecurrence}. Specifically, if we let $\MAX_{Y}$ denote the totally saturated configuration on $Y$, then we have
\[
e=[\MAX_{Y}-(\MAX_Y\oplus\MAX_Y)]\oplus\MAX_Y.
\]
\end{rmk}

\begin{eg}
\label{TheExample}
Let $\X$ be the graph in Figure~\ref{IdempotentExample}. 

\begin{figure}[h] 
\begin{center}
\begin{tikzpicture}[>=stealth']
\tikzset{VertexStyle} = [shape = circle]
\SetUpVertex[Lpos=-90]
\SetUpEdge[lw= 1.2pt]
\SetGraphUnit{1.5} 
\Vertex[Math,L=a_{1},Lpos=-135]{a1}
\NOEA[Math,L=b_{3},Lpos=90](a1){b3}
\SOEA[Math,L=b_{1}](a1){b1}
\NOEA[Math,L=b_{2},Lpos=-45](b1){b2}
\EA[L=sink,Lpos=-90](b2){s}
\NO[Math,L=a_{2},Lpos=90](s){a2}
\NOEA[Math,L=d_{2},Lpos=90](s){d2}
\SOEA[Math,L=d_{1}](s){d1}
\EA[Math,L=d_{3},Lpos=0](d2){d3}
\EA[Math,L=d_{4}](d1){d4}
\NOEA[Math,L=c_{1},Lpos=180](d4){c1}
\NOEA[Math,L=c_{4},Lpos=90](c1){c4}
\SOEA[Math,L=c_{2}](c1){c2}
\NOEA[Math,L=c_{3},Lpos=0](c2){c3}
\NOEA[Math,L=a_{3}](c3){a3}
\tikzset{EdgeStyle/.style={->}}
\Edge[style={bend left}](a1)(b3)
\Edge[style={bend right}](a1)(b1)
\Edge(b3)(b1)
\Edge[style={bend left}](b1)(b3)
\Edge(b3)(b2)
\Edge[style={bend right}](b2)(b3)
\Edge(b1)(b2)
\Edge[style={bend left}](b2)(b1)
\Edge(b2)(s)
\Edge[style={bend right}](a2)(s)
\Edge[style={bend left}](a2)(s)
\Edge(d2)(s)
\Edge(d1)(s)
\Edge(d1)(d2)
\Edge(d4)(d1)
\Edge(d3)(d2)
\Edge(d3)(d1)
\Edge(d4)(d2)
\Loop[dist=1cm,dir=NO,style={very thick,->}](d3)
\Edge(c1)(d3)
\Edge(c1)(d4)
\Edge(c4)(c1)
\Edge[style={bend left}](c1)(c4)
\Edge(c1)(c2)
\Edge[style={bend left}](c2)(c1)
\Edge(c2)(c3)
\Edge[style={bend left}](c3)(c2)
\Edge(c3)(c4)
\Edge[style={bend left}](c4)(c3)
\Edge(c3)(c1)
\Edge(c2)(c4)
\Edge(a3)(c4)
\Edge(a3)(c3)
\end{tikzpicture}
\end{center}
\caption{$\X$ for Example \ref{TheExample}.}\label{IdempotentExample}
\end{figure}

The acyclic vertices are $a_1, a_2, a_3, d_1, d_2,$ and $d_4$. 
The sets $B= \{ b_{1}, b_{2}, b_{3} \}$, $C=\{ c_{1}, c_{2}, c_{3}, c_{4} \}$, and $D = \{ d_{3} \}$ are the cyclic strong components. $P=\{ B, C, D \}$ is the poset of cyclic strongly connected components, where $C < D$ and $B$ is incomparable to $C$ and $D$. The Hasse diagram of $P$ is given in Figure~\ref{Hasse}. Note that, since $\outdeg(d_2)=1,$ $d_2$ holds no grains of sand in any stable configuration on $\X$.

\begin{figure}[h] 
\begin{center}
\begin{tikzpicture}
\SetUpVertex[Lpos=-90]
\SetUpEdge[lw= 1.2pt]
\SetGraphUnit{1.5} 
\tikzset{VertexStyle} = [shape = circle]
\Vertex[Math,L=C]{c}
\NOEA[Math,L=D](c){d}
\SOEA[Math,L=B](d){b}
\tikzset{EdgeStyle/.style={-}}
\Edge(c)(d)
\end{tikzpicture}
\end{center}
\caption{Hasse diagram of the strongly connected components of $\X$.}\label{Hasse}
\end{figure}
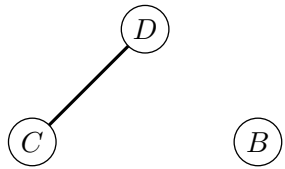

The filters on $P$ are $F_{0} = \{  \}$, $F_{1} = \{B  \}$, $F_{2} = \{ D  \}$, $F_{3} = \{B, D  \}$,  $F_{4} = \{ C,D \}$, $F_{5} = \{B,C,D  \}$. Each filter corresponds to an idempotent in the following way: 

\begin{itemize}
	\item $F_{0}$ corresponds to $0$. 
	\item $F_{1}$ corresponds to the idempotent $e_{1}$ where vertices $b_{1}$ and $b_3$ each have 1 grain, $b_{2}$ has 2 grains, and all other vertices have zero grains. 
	\item $F_{2}$ corresponds to the idempotent $e_{2}$ where $d_1$ has 1 grain, $d_3$ has 2 grains, and all other vertices have zero grains. 
	\item $F_{3}$ corresponds to the idempotent $e_{3}=e_1\oplus e_2.$ 
	\item $F_{4}$ corresponds to the idempotent $e_{4}$ where $c_1,c_3,$ and $d_1$ each have 1 grain, $c_2$ and $d_3$ each have 2 grains, and all other vertices have zero grains. 
	\item $F_{5}$ corresponds to the idempotent $e_{5} = e_1\oplus e_4$, which is the identity of $\G$. 
\end{itemize}

\end{eg}

\section{The maximal subgroups of $\M$}

\label{SubSecMaxSubgroups}

In this section we state and prove the first of our main results, which is a combinatorial classification of all the maximal subgroups of $\M$. We prove in Theorems \ref{ThmZeroMaxSubgroup} and \ref{ThmNonzeroIdemMaxSubgroup} that 
they are all sandpile groups on certain subgraphs of $\X$ which are easy to identify. In this section we assume the sink of $\X$ has no outward edges. 

\begin{defa}A {\em subgroup} of a semigroup is a nonempty subset of the semigroup which is a group under the semigroup operation. A subgroup of a semigroup is {\em maximal} if it is not properly contained in any other subgroup of the semigroup.
\end{defa}

The following is a well-known result in semigroup theory \cite[Section 1.7]{CliffPres}. 

\begin{thm}Let $(S,\cdot)$ be a semigroup. Each idempotent $e\in S$ is the identity for a unique maximal subgroup $G_e$ of $S$, and $G_e$ is the group of units of the monoid $\{e\}\cdot S\cdot\{e\}$, which has $e$ as its identity. Every subgroup of $S$ which contains $e$ is contained in $G_e$.
\end{thm}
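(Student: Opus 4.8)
The plan is to realize $G_e$ concretely as the group of units of the monoid $M_e := \{e\}\cdot S\cdot\{e\}$ and then verify each clause of the statement in turn, the real work being the last sentence.

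First I would confirm that $M_e = \{e\cdot s\cdot e : s\in S\}$ is genuinely a monoid with identity $e$. Closure is the only nontrivial point, and it follows from idempotence $e\cdot e = e$ via the one-line computation $(e\cdot s\cdot e)\cdot(e\cdot t\cdot e) = e\cdot(s\cdot e\cdot t)\cdot e \in M_e$. That $e$ is a two-sided identity on $M_e$ is immediate from $e\cdot e = e$, and associativity is inherited from $S$. Once $M_e$ is known to be a monoid, its group of units $G_e$—the elements of $M_e$ having a two-sided inverse in $M_e$ relative to $e$—is automatically a group with identity $e$; and since the operation on $M_e$ is the restriction of the operation on $S$, this $G_e$ is a subgroup of $S$ in the sense of the first definition.

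The crux is the final assertion: every subgroup $H$ of $S$ with $e\in H$ satisfies $H\subseteq G_e$. The key observation, which I would invoke at the outset, is that a group contains exactly one idempotent, namely its identity (this fact is already used in the excerpt). Hence $e\in H$ forces $e$ to be the identity of $H$, so every $h\in H$ obeys $e\cdot h = h\cdot e = h$, giving $h = e\cdot h\cdot e\in M_e$. Its inverse $h^{-1}$ in $H$ likewise lies in $M_e$ and satisfies $h\cdot h^{-1} = h^{-1}\cdot h = e$, so $h$ is a unit of $M_e$; thus $h\in G_e$ and $H\subseteq G_e$.

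The uniqueness and maximality claims then fall out formally: $G_e$ is itself a subgroup containing $e$, so by the previous paragraph it contains every subgroup containing $e$, making it the largest such subgroup; consequently any maximal subgroup whose identity is $e$ must equal $G_e$. I expect the main obstacle to be bookkeeping rather than conceptual difficulty—applying $e\cdot e = e$ in exactly the right spots when collapsing products—while the one genuinely load-bearing idea is that $e\in H$ pins down $e$ as the identity of $H$, which is precisely what lets an arbitrary subgroup be located inside the localized monoid $M_e$.
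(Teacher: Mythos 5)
Your proof is correct and complete; the only load-bearing step, as you identify, is that a group contains exactly one idempotent, so $e\in H$ forces $e$ to be the identity of $H$ and hence $H\subseteq \{e\}\cdot S\cdot\{e\}$. Note that the paper itself gives no proof of this statement---it cites it as a well-known result from Clifford and Preston---and your argument is the standard one, so there is nothing to compare against beyond confirming that your reasoning is sound.
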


In the language of sandpile monoids, we have:

\begin{cor} 
\label{ThmCliffPresMaxSubgp}
Each idempotent $e\in \M$ is the identity for a unique maximal subgroup $G_e$ of $\M$, and $G_e$ is the group of units of the monoid $\{e\oplus m:m\in \M\}$, which has $e$ as its identity. Every subgroup of $\M$ which contains $e$ is contained in $G_e$.
\end{cor}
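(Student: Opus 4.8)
The plan is to obtain this corollary as an immediate specialization of the general semigroup theorem just stated, taking $(S,\cdot)$ to be the sandpile monoid $(\M,\oplus)$. Since $\M$ is a finite commutative semigroup and the cited theorem applies to arbitrary semigroups, two of the three assertions transfer verbatim with no extra work: each idempotent $e\in\M$ is the identity of a unique maximal subgroup $G_e$, and every subgroup of $\M$ containing $e$ is contained in $G_e$. These are exactly the first and last sentences of the corollary, so there is nothing to check beyond invoking the theorem.

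The only point requiring attention is the explicit description of $G_e$. The general theorem identifies $G_e$ as the group of units of the sandwich monoid $\{e\}\cdot S\cdot\{e\}$, which in our notation is $\{e\oplus m\oplus e:m\in\M\}$. To match the statement of the corollary I would simplify this two-sided set to the one-sided set $\{e\oplus m:m\in\M\}$. First I use the commutativity of $\oplus$ to write $e\oplus m\oplus e=e\oplus e\oplus m$, and then the idempotency of $e$ (that is, $e\oplus e=e$) to collapse this to $e\oplus m$. Hence $\{e\oplus m\oplus e:m\in\M\}=\{e\oplus m:m\in\M\}$, exactly as claimed, and the fact that this set forms a monoid with identity $e$ is inherited directly from the general theorem (taking $m=0$ shows $e\oplus 0=e$ lies in the set and serves as its identity).

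I do not expect any genuine obstacle here: the corollary is a faithful translation of the general result into sandpile language, with all of the substantive content—the existence and uniqueness of $G_e$, its maximality, and the group-of-units characterization—supplied by the cited theorem. The one real simplification, reducing the two-sided sandwich set to the one-sided set, is an elementary consequence of commutativity together with idempotency and occupies only a single line.
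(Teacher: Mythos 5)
Your proposal is correct and matches the paper exactly: the paper states this corollary with no written proof, treating it as the immediate specialization of the preceding general semigroup theorem to the commutative monoid $(\M,\oplus)$, with the sandwich set $\{e\}\cdot S\cdot\{e\}$ collapsing to $\{e\oplus m:m\in\M\}$ by commutativity and idempotency. Your one explicit verification step is precisely the simplification the paper leaves implicit.
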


Given an idempotent $e\in \M$, let us denote by $G_e$ the corresponding maximal subgroup of $\M$. $\G$ is a subgroup of $\M$. In fact it is a maximal subgroup:

\begin{thm}$\G$ is a maximal subgroup of $\M$.
\end{thm}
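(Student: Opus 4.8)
The plan is to show that $\G$ equals the maximal subgroup $G_e$ whose identity is the identity of $\G$, so that $\G$ is not properly contained in any subgroup of $\M$. Let $e$ denote the identity of $\G$, which exists by part \ref{ThmMakeTheId} of Theorem \ref{thmThreePartRecurrence}. Since $\G$ is a group with identity $e$, it is in particular a subgroup of $\M$ containing $e$, and so $\G\subseteq G_e$ by Corollary \ref{ThmCliffPresMaxSubgp}. The entire content of the theorem is thus the reverse inclusion $G_e\subseteq \G$.

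To obtain it, I would combine two facts. On the one hand, Corollary \ref{ThmCliffPresMaxSubgp} tells us that $G_e$ is the group of units of the monoid $\{e\oplus m:m\in\M\}$, so in particular $G_e\subseteq\{e\oplus m:m\in\M\}$. On the other hand, $\G$ is by definition the minimal ideal of $\M$, hence an ideal; since $e\in\G$, for every $m\in\M$ we have $e\oplus m\in\G$ by the ideal property. Therefore $\{e\oplus m:m\in\M\}\subseteq\G$, and chaining the two inclusions gives $G_e\subseteq\{e\oplus m:m\in\M\}\subseteq\G$. Together with $\G\subseteq G_e$ this forces $\G=G_e$, and since $G_e$ is a maximal subgroup of $\M$, so is $\G$.

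There is no real obstacle here: the proof is a direct application of the general semigroup statement in Corollary \ref{ThmCliffPresMaxSubgp} together with the fact that $\G$ is an ideal. The one point that carries all the weight is the inclusion $\{e\oplus m:m\in\M\}\subseteq\G$. It is precisely the ideal property of the minimal ideal that collapses the ambient monoid $\{e\oplus m:m\in\M\}$ onto $\G$ (indeed one checks $\{e\oplus m:m\in\M\}=\G$, since $g=e\oplus g$ for every $g\in\G$), and this is exactly what prevents $G_e$ from being strictly larger than $\G$.
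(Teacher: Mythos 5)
Your proof is correct and is essentially the paper's own argument: both directions rest on the same two facts, namely that $\G$ is a subgroup containing $e$ (giving $\G\subseteq G_e$) and that $\G$ is an ideal containing $e$ so that anything of the form $e\oplus m$ lies in $\G$ (giving $G_e\subseteq\G$). The paper phrases the second step as ``$g\oplus e=g\in\G$ for $g\in G_e$'' rather than routing through the containment $G_e\subseteq\{e\oplus m:m\in\M\}$, but this is the same computation.
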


\begin{proof}Let $e$ be the identity of $\G$. $\G$ is a subgroup of $\M$ containing $e$, so $\G \subseteq G_e$. To explain why $G_e \subseteq \G$, let $g\in G_e$. Then, since $e\in \G$ and $\G$ is an ideal, $g\oplus e=g$ is in $\G$.
\end{proof}

We already know from the results of Section \ref{SubSecIdemStruct} that, given a nonzero idempotent $e\in \M$, $e$ is the identity for the sandpile group on $\mi(\supp(e))$, and this sandpile group is a subgroup of $\M$ under the natural inclusion. However, this group is not necessarily the {\em maximal} subgroup in $\M$ containing $e$. To give a combinatorial description of $G_e$ we will use the following lemma.

\begin{lem} Let $e$ be an idempotent of $\M$ and let $a\in\M$. Then $a\in G_e$ if and only if $a$ and $e$ are mutually accessible; that is, $a\in G_e$ if and only if $a=e\oplus m$ and $e=a\oplus k$ for some $k,m\in \M$.
\end{lem}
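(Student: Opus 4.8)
The plan is to prove the two directions separately, exploiting the semigroup-theoretic characterization of $G_e$ from Corollary \ref{ThmCliffPresMaxSubgp}, namely that $G_e$ is the group of units of the monoid $M_e=\{e\oplus m:m\in\M\}$ whose identity is $e$.

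For the forward direction, suppose $a\in G_e$. Since $G_e$ is a group with identity $e$, the element $a$ has an inverse $a^{-1}\in G_e$ with $a\oplus a^{-1}=e$. First I would observe that $a\in G_e\subseteq M_e$ means $a=e\oplus m$ for some $m\in\M$ (this is just membership in $M_e$, which follows because every element of $G_e$ is fixed by $e$: $a=a\oplus e$). This gives accessibility of $a$ from $e$. For the reverse accessibility, $e=a\oplus a^{-1}$ with $a^{-1}\in\M$, so $e$ is accessible from $a$. Thus $a$ and $e$ are mutually accessible, which is the easy half.

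The reverse direction is where the real content lies. Assume $a=e\oplus m$ and $e=a\oplus k$ for some $k,m\in\M$. The goal is to produce $a\in G_e$, i.e.\ to show $a$ lies in the group of units of $M_e$. The key steps are: first, establish that $a\in M_e$, which is immediate from $a=e\oplus m$ so that $e\oplus a=a$; hence $a$ is an element of the monoid $M_e$ with identity $e$. Second, I would exhibit a two-sided inverse for $a$ inside $M_e$. The natural candidate is $e\oplus k$: using $e=a\oplus k$ and the fact that $e$ is idempotent and central (the monoid is commutative), compute $a\oplus(e\oplus k)=e\oplus(a\oplus k)=e\oplus e=e$, and note $e\oplus k\in M_e$. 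Thus $a$ has an inverse in $M_e$ and is therefore a unit, so $a\in G_e$.

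The main obstacle—really the only subtle point—is making sure that the purported inverse genuinely lies in the monoid $M_e$ and that the computation $a\oplus(e\oplus k)=e$ is carried out entirely within $M_e$ using $e$ as the identity, rather than within $\M$ with $0$ as the identity. Commutativity and idempotency of $e$ make this bookkeeping routine, but one must verify that being a unit in $M_e$ is exactly the condition defining $G_e$ as in Corollary \ref{ThmCliffPresMaxSubgp}. Once that identification is clear, both directions close cleanly, and the lemma follows.
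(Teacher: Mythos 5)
Your proposal is correct and follows essentially the same route as the paper: both directions rest on Corollary \ref{ThmCliffPresMaxSubgp}'s identification of $G_e$ as the group of units of $\{e\oplus m:m\in\M\}$, and in the reverse direction you exhibit the same inverse $e\oplus k$ via the same computation $a\oplus(e\oplus k)=e\oplus(a\oplus k)=e\oplus e=e$. No gaps.
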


\begin{proof}We have from Corollary \ref{ThmCliffPresMaxSubgp} that $G_e$ is the group of units of $\{e\oplus m:m\in \M\}$, i.e.,
\[
G_e=\{a\in\M:a=e\oplus m \textup{ and }a\oplus(e\oplus k)=e \textup{ for some }m,k\in\M\}.
\]
By definition, if $a\in G_e$ then $a$ and $e$ are mutually accessible. To show the reverse, suppose $a$ and $e$ are mutually accessible. Then $a=e\oplus m$ for some $m\in \M$ and $e=a\oplus k$ for some $k\in \M$. Adding $e$ to each side of the last equation and using that $e$ is idempotent, we obtain $a\oplus (e\oplus k)=e$.
\end{proof}

We now describe the subgraph of $\X$ whose sandpile group will turn out to be $G_e$.

\begin{notationa}Let $e\in\M$ be idempotent. Let $A(e)$ denote the set of acyclic vertices $v\in V$ for which $v\notin \cl(\supp(e))$ and for which the only cyclic strong components of $\X$ accessible from $v$ are those which intersect $\supp(e)$ (i.e., those which are contained in $\mi(\supp(e))$).
Let $\mS(e)$ denote the subgraph of $\X$ induced by
\[
\cl(\supp(e))
\cup
A(e)
\cup
\{\textup{sink}\}.
\] 
\end{notationa}

Notice that, provided $\mS(e)\neq\{\textup{sink}\}$ (in particular, when $e\neq 0$), $\mS(e)$ is a subgraph of $\X$ which is a legal graph for having a sandpile monoid, in the sense that $\mS(e)$ has at least one non-sink vertex and the sink of $\mS(e)$ is accessible from each of its non-sink vertices. On the other hand, if $\mS(e)=\{\textup{sink}\}$, which is only possible if $e=0$, then the only thing preventing $\mS(e)$ from being a legal graph for having a sandpile monoid is that it has no non-sink vertices. We therefore deal with $G_0$ first.

\begin{thm}
\label{ThmZeroMaxSubgroup}
If $\mS(0)=\{\textup{sink}\}$, then $G_0=\{0\}$. Otherwise, $G_0$ is equal to the sandpile group on $\mS(0)$, viewed as a subgroup of $\M$ by the natural inclusion.
\end{thm}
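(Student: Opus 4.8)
The plan is to first reduce $G_0$ to a purely combinatorial description of its supports, and then recognize that description as a sandpile monoid which, by acyclicity, is in fact a sandpile group. By the preceding lemma, $a\in G_0$ if and only if $a$ and $0$ are mutually accessible. Since $0$ is the identity of $\M$, every element is accessible from $0$ (as $0\oplus a=a$), so this condition collapses to the single requirement that $0$ be accessible from $a$. First I would invoke part \ref{LemZeroAccCycle} of Theorem \ref{BToumpMainThm1}, which says $0$ is accessible from $a$ exactly when $a$ carries no grain on any vertex from which a cycle of $X$ is accessible.

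Next I would identify this set of ``safe'' vertices with $A(0)$. For $e=0$ we have $\supp(0)=\varnothing$, so the defining conditions for $A(0)$ reduce to: $v$ is acyclic, and no cyclic strong component of $\X$ is accessible from $v$. Since a vertex lying on a cycle always has a cycle accessible from it, the acyclicity clause is automatic, and $A(0)$ is precisely the set of vertices from which no cycle of $X$ is accessible. Combining this with the previous step yields the key identity $G_0=\{a\in\M:\supp(a)\subseteq A(0)\}$.

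From here the two cases are quick. If $\mS(0)=\{\textup{sink}\}$ then $A(0)=\varnothing$, and the only state supported on $\varnothing$ is $0$, so $G_0=\{0\}$. Otherwise $A(0)\neq\varnothing$ and $\mS(0)$ is a legal sandpile graph with non-sink vertex set $A(0)$; I would then argue that the natural inclusion identifies $\SPM{\mS(0)}$ with exactly $\{a\in\M:\supp(a)\subseteq A(0)\}=G_0$. This requires two checks: that $A(0)$ is forward-closed (if $v\in A(0)$ and $v\to w$, then any cycle accessible from $w$ is accessible from $v$, so none is, whence $w\in A(0)$), which guarantees that toppling a state supported on $A(0)$ in $\X$ agrees with toppling it in $\mS(0)$; and that $\outdeg_{\mS(0)}(v)=\outdeg_{\X}(v)$ for each $v\in A(0)$, which holds because $\mS(0)$ retains all edges directed out of its vertices, so the notions of stability agree. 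Finally, since no cycle is accessible from any vertex of $A(0)$, the graph $\mS(0)$ contains no cycles, so part \ref{thmMGAcyclic} of Theorem \ref{BToumpMainThm1} applied to $\mS(0)$ gives $\SPG{\mS(0)}=\SPM{\mS(0)}$, whence $G_0=\SPG{\mS(0)}$.

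The accessibility reduction and the two-case split are routine bookkeeping. The step I expect to require the most care is the identification via the natural inclusion: one must verify both that $A(0)$ is forward-closed (so the two monoid operations coincide) and that out-degrees are preserved (so the two notions of stability coincide), and only then invoke the acyclicity criterion to upgrade the sandpile monoid on $\mS(0)$ to a sandpile group. Indeed, the acyclicity of $\mS(0)$ is exactly what makes the statement true, since it is the reason the maximal subgroup at $0$ is a genuine group rather than merely a monoid.
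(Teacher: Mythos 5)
Your proposal is correct and follows essentially the same route as the paper's proof: reduce membership in $G_0$ to the single condition that $a$ can access $0$, apply the cycle-accessibility criterion to identify the admissible supports with $A(0)$, and use the acyclicity of $\mS(0)$ to conclude that its sandpile monoid is already its sandpile group. Your explicit checks that $A(0)$ is forward-closed and that out-degrees are preserved under the natural inclusion are details the paper leaves implicit, but they do not change the argument.
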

\begin{proof}If $\mS(0)=\{\textup{sink}\}$, then there is a path to a cycle of $X$ from every vertex $v\in V$. By part \ref{LemZeroAccCycle} of Theorem \ref{BToumpMainThm1}, then, no nonzero state of $\M$ can access 0, so $G_0=\{0\}$. 

On the other hand, if $\mS(0)\neq\{\textup{sink}\}$, then $\mS(0)=A(0)\cup\{\textup{sink}\}$. Let $a\in \M$ be any state whose support is contained in $\mS(0)$. Certainly $a$ is accessible from $0$, and $0$ is accessible from $a$ by part \ref{LemAcyclicAccZero} of Theorem \ref{BToumpMainThm1}. If $a\in \M$ is any state with a grain outside of $\mS(0)$, then $a$ cannot access $0$ by part \ref{LemZeroAccCycle} of Theorem \ref{BToumpMainThm1}. Thus $G_0=\{a\in \M: \supp(a)\subseteq \mS(0)\}$, the sandpile monoid of $\mS(0)$. This is equal to the sandpile group on $\mS(0)$ by part \ref{ThmMGIff0Rec} of Theorem \ref{BToumpMainThm1}.
\end{proof}

We now deal with the nonzero idempotents. The following theorem allows us to build the $G_e$ from the sandpile groups of the $\mi(\supp(e))$.

\begin{thm} 
\label{ThmBuildTheMaxSubgp}
Let $e\in\M$ be idempotent with $e\neq 0$. Then $$G_e=\{g\oplus j: g,j\in \M\textup{ where }g\textup{ is recurrent on } \mi(\supp(e)) \textup{ and }\supp(j)\subseteq A(e)\}.$$
\end{thm}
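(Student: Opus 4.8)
The plan is to use the preceding lemma, which reduces $a\in G_e$ to mutual accessibility of $a$ and $e$, and to prove the two inclusions of the asserted equality separately. Write $Y=\mi(\supp(e))$ and $Z=\mS(e)$, and call a subgraph \emph{downstream-closed} if every out-edge from one of its non-sink vertices lands again in the subgraph or at the sink. I first record that $Z$ is downstream-closed in $\X$: an out-neighbor of a vertex of $\cl(\supp(e))$ is again reachable from $\supp(e)$, and an out-neighbor $u'$ of a vertex of $A(e)$ can only access cyclic strong components that meet $\supp(e)$, so $u'$ lies in $\cl(\supp(e))$ if it is cyclic and in $\cl(\supp(e))\cup A(e)$ if it is acyclic. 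Hence toppling a state supported on $Z$ gives the same result in $Z$ as in $\X$; likewise $Y$ is downstream-closed in $Z$. The crucial preliminary is that $e$ is the \emph{identity of} $\SPG{Z}$, not merely of $\SPG{Y}$: as $e$ is idempotent with $\supp(e)\subseteq\cl(\supp(e))$ it is idempotent in $\SPM{Z}$, and since $e$ is recurrent on $Y$ (Theorem \ref{CorRecurrentIdemOnItsISupp}) it meets every cycle of $Y$, which are exactly the cycles of $Z$ because $Z$ adds only the acyclic vertices $A(e)$. Thus $e$ corresponds to the full filter, and by Theorem \ref{ThmBijection} applied to $Z$ the full filter corresponds to the unique idempotent meeting every cycle, namely the identity of the minimal ideal $\SPG{Z}$ (that identity is recurrent, hence meets every cycle by part \ref{CorGInGOneGrainEveryCycle} of Theorem \ref{BToumpMainThm1}). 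So $e$ is the identity of $\SPG{Z}$, and in particular $e$ is recurrent on $Z$.

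For the inclusion $\supseteq$, take $a=g\oplus j$ with $g$ recurrent on $Y$ and $\supp(j)\subseteq A(e)$. Accessibility of $a$ from $e$ is immediate: since $e$ is the identity of $\SPG{Y}$ we have $e\oplus g=g$, so $e\oplus a=(e\oplus g)\oplus j=g\oplus j=a$. For the reverse accessibility, let $h\in\SPG{Y}$ be the group-inverse of $g$, so $g\oplus h=e$; then $a\oplus h=(g\oplus h)\oplus j=e\oplus j$, a state supported on $Z$. Because $e$ is recurrent on $Z$ it is accessible from every element of $\SPM{Z}$, in particular from $e\oplus j$, and this accessibility persists in $\M$ since $Z$ is downstream-closed; composing, $e$ is accessible from $a$. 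By the preceding lemma $a\in G_e$.

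For the inclusion $\subseteq$, take $a\in G_e$. The first step is to confine the support. Because $G_e$ is a group with identity $e$, the idempotent power $e_a$ of $a$ equals $e$; by Theorem \ref{CorRecurrentIdemOnItsISupp} and part \ref{CorGInGOneGrainEveryCycle} of Theorem \ref{BToumpMainThm1}, $e=e_a$ carries a grain on every cyclic strong component accessible from $\supp(a)$, so every such component meets $\supp(e)$. Now for $v\in\supp(a)$: if $v$ is cyclic, its component meets $\supp(e)$ and hence lies in $\cl(\supp(e))$, so $v\in\cl(\supp(e))$; if $v$ is acyclic and $v\notin\cl(\supp(e))$, then the displayed property is exactly the definition of $A(e)$, so $v\in A(e)$. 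Thus $\supp(a)\subseteq\cl(\supp(e))\cup A(e)$, i.e. $a\in\SPM{Z}$. Next, $a\in G_e$ gives $a\oplus e=a$ in $\M$, hence in $\SPM{Z}$, so part \ref{ThmMPlusEEqM} of Theorem \ref{thmThreePartRecurrence} (with $e$ the identity of $\SPG{Z}$) shows $a$ is recurrent on $Z$. Finally I invoke that restriction to a downstream-closed subgraph preserves recurrence: restricting $a$ from $Z$ to $Y$ shows $g:=a|_{\cl(\supp(e))}$ is recurrent on $Y$, and setting $j:=a|_{A(e)}$, stability of $a$ gives $a=g\oplus j$.

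The main obstacle is the reverse inclusion, and within it the two facts isolated above: that $e$ is the identity of $\SPG{Z}$ on the \emph{enlarged} graph, and that recurrence descends to downstream-closed subgraphs. I expect the latter to demand the most care, because restriction is \emph{not} a monoid homomorphism---upstream topplings on $A(e)$ pour extra sand into $Y$---so the argument must track the total inflow from the deleted upstream vertices and exhibit the restricted avalanche, augmented by that inflow, as a legal avalanche from $\MAX$ of $Y$. The confinement $\supp(a)\subseteq\cl(\supp(e))\cup A(e)$ is the step at which a direct toppling argument bogs down (a grain on an acyclic vertex need not visibly force a grain onto a downstream cycle), which is why I route it through the identity $e_a=e$ and the known description of idempotent supports rather than attempting to follow individual grains.
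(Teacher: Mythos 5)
Your overall architecture is sound and genuinely different from the paper's in two places. For the containment $\supseteq$ you obtain accessibility of $e$ from $a$ by inverting $g$ in $\SPG{\mi(\supp(e))}$ and then using recurrence of $e$ on $\mS(e)$, whereas the paper shows directly that $e_a=e$ via Theorem \ref{thmEventuallyRecurrent} and Remark \ref{RmkNoInitialAbnormalForIdem}; both work. For the support confinement in $\subseteq$ you route through $e_a=e$ and the fact that the identity of $\SPG{\mi(\supp(a))}$ has a grain on every cycle accessible from $\supp(a)$, whereas the paper argues by contradiction, tracking a grain along a path to a cyclic strong component outside $\mi(\supp(e))$; your version is clean and correct. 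Your preliminary claim that $e$ is the identity of $\SPG{\mS(e)}$ (via Theorem \ref{ThmBijection} applied to the downstream-closed subgraph $\mS(e)$) is also correct and not circular, even though the paper defers that fact to the following theorem.

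The one genuine gap is the last step of $\subseteq$: you \emph{invoke} that recurrence descends from $\mS(e)$ to the downstream-closed subgraph $\mi(\supp(e))$, and you yourself flag that this needs a careful inflow-tracking argument that you do not supply. The lemma is true (your sketch---add the total inflow from the $A(e)$-topplings upfront and replay the $\mi(\supp(e))$-topplings as a legal avalanche dominating one from $\MAX$ of $\mi(\supp(e))$---can be made rigorous), but as written the proof is incomplete there. More to the point, the detour is unnecessary: from $a\oplus e=a$, writing $a=g+j$ with $\supp(j)\cap\mi(\supp(e))=\varnothing$, the stabilization of $g+j+e$ never topples any vertex of $A(e)$, since all the added sand lies in the downstream-closed set $\mi(\supp(e))$ and never leaves it; hence $a\oplus e=j+(g\oplus e)$, so $g\oplus e=g$, and part \ref{ThmMPlusEEqM} of Theorem \ref{thmThreePartRecurrence} applied on $\mi(\supp(e))$ gives recurrence of $g$ there in one line. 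This is what the paper does, and it sidesteps the restriction lemma entirely.
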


\begin{proof}We handle the reverse containment first. Consider $a=g\oplus j$, where $g$ is recurrent on $\mi(\supp(e))$ and $\supp(j)\subseteq A(e)$. Then $a=g\oplus j=(e\oplus g) \oplus j$, so $a$ is accessible from $e$. By part \ref{CorGInGOneGrainEveryCycle} of Theorem \ref{BToumpMainThm1}, $g$ has a grain of sand on every cycle of $\mi(\supp(e))$. The support of $g$ and the support of $j$ do not intersect, so $g\oplus j=g+j$. Thus the cyclic strong components of $\X$ accessible from the vertices of $\supp(a)$ are precisely those contained in $\mi(\supp(e))$. By Theorem \ref{thmEventuallyRecurrent} and Remark \ref{RmkNoInitialAbnormalForIdem}, then, $e_a=e$, i.e., $e$ is accessible from $a$.

Now, suppose $a\in G_e$, i.e., $a$ and $e$ are mutually accessible. Thus $a=e\oplus m$ and $e=a\oplus k$ for some $m,k\in \M$. Write $a=g\oplus j$ for the unique states $g,j\in \M$ such that $\supp(g)\subseteq \mi(\supp(e))$ and $\supp(j)\cap \mi(\supp(e)) = \varnothing$. We claim that $g$ is recurrent on $\mi(\supp(e))$ and $\supp(j)\subseteq A(e)$. 

To see why, first, $a=e\oplus m=g\oplus j$, so $a\oplus e = e\oplus e\oplus m = g\oplus j\oplus e$, but $e\oplus e\oplus m = e\oplus m=a$, so $g\oplus j\oplus e = g\oplus j$. Since $\supp(j)\cap\mi(\supp(e))=\varnothing$, $g\oplus e=g$, i.e., $g$ is recurrent on $\mi(\supp(e))$.
Finally, if $\supp(j) \nsubseteq A(e)$, then $j$ has a grain of sand on a vertex from which some cyclic strong component $C\nsubseteq \mi(\supp(e))$ of $\X$ 
is accessible via a path $p$ which does not intersect $\mi(\supp(e))$. Note that $C$ does not intersect $\supp(e)$. We have $a=g\oplus j$ and $e$ is accessible from $a$, so $e=g\oplus j\oplus k$ for some $k\in\M$. But then $\supp(e)\neq \supp(g\oplus j\oplus k)$ because $g\oplus j\oplus k$ must have a grain somewhere along $p$ or on $C$. Hence $\supp(j)\subseteq A(e)$.
\end{proof}

\begin{thm}
\label{ThmNonzeroIdemMaxSubgroup}
Let $e\in \M$ be idempotent with $e\neq 0$. Then $G_e$ is equal to the sandpile group on $\mS(e)$, viewed as a subgroup of $\M$ by the natural inclusion.
\end{thm}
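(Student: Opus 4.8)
The plan is to read $G_e$ off from Theorem \ref{ThmBuildTheMaxSubgp} and then recognize the resulting set as the set of recurrent states on $\mS(e)$, which is exactly the sandpile group there by Theorem \ref{ThmRecIffMinimal}. Write $Y=\mi(\supp(e))$ and $Z=\mS(e)$, and recall that Theorem \ref{ThmBuildTheMaxSubgp} gives $G_e=\{g\oplus j: g\text{ recurrent on }Y\text{ and }\supp(j)\subseteq A(e)\}$. Since $e\neq 0$ we have $\cl(\supp(e))\neq\varnothing$, so $Z$ is a legal graph for having a sandpile monoid, and everything below takes place among states supported on $Z$ (equivalently, inside $\SPM{Z}$ under the natural inclusion). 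The whole argument rests on one structural observation: because $\cl(\supp(e))$ is closed under reachability and $A(e)$ is disjoint from it, no edge of $\X$ runs from a vertex of $\cl(\supp(e))$ into a vertex of $A(e)$. Consequently, inside $Z$ the vertices of $A(e)$ receive grains only from other vertices of $A(e)$ and form an acyclic subsystem that is ``upstream'' of $Y$, while the non-sink vertices of $Y$ send grains only within $Y$ or to the sink; in particular every cycle of $Z$ lies in $Y$.

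First I would prove that each $a=g\oplus j$ with $g$ recurrent on $Y$ and $\supp(j)\subseteq A(e)$ is recurrent on $Z$, using the characterization that $a$ is recurrent iff it is accessible from every state, together with transitivity of accessibility. Given an arbitrary state $b$ on $Z$, I would drive the $A(e)$-part to $j$ first: the subsystem on $A(e)$ is acyclic, so by part \ref{thmMGAcyclic} of Theorem \ref{BToumpMainThm1} every stable $A(e)$-configuration is recurrent there, whence $j$ is accessible from the $A(e)$-part of $b$; the topplings involved spill some grains into $Y$ but never back into $A(e)$. With the $A(e)$-part now frozen at $j$, I would use that $g$ is recurrent on $Y$ to reach $g$ from whatever $Y$-configuration has accumulated, noting that topplings inside $Y$ go only to the sink or within $Y$, so the $A(e)$-part stays at $j$. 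Composing these two accesses yields $a$ from $b$.

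Conversely, I would show any state $a$ recurrent on $Z$ has the required form. Decompose $a=g+j$ along the partition $\cl(\supp(e))\sqcup A(e)$, so $\supp(j)\subseteq A(e)$ automatically; the task is to see that $g$ is recurrent on $Y$. Since $a$ is recurrent on $Z$ it is accessible from $\MAX_{Z}$ by part \ref{thmAccFromMax} of Theorem \ref{thmThreePartRecurrence}, say $\MAX_{Z}\oplus x=a$. Restricting the stabilization to $Y$ and using the order-independence of toppling, the net effect on $Y$ is that $Y$ starts at $\MAX_{Y}$, receives the added grains $x|_{Y}$ together with all spillover sent into $Y$ by topplings of $A(e)$-vertices, and then stabilizes without ever sending anything back to $A(e)$. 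Hence $g=\MAX_{Y}\oplus w$ for a suitable $w$ supported on $Y$, so $g$ is accessible from $\MAX_{Y}$ and therefore recurrent on $Y$ by part \ref{thmAccFromMax} of Theorem \ref{thmThreePartRecurrence} applied to $Y$. Combining the two inclusions shows that the set of recurrent states on $Z$ equals the set in Theorem \ref{ThmBuildTheMaxSubgp}, i.e. $\SPG{\mS(e)}=G_e$.

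I expect the main obstacle to be the forward direction, and specifically the claim that the $Y$-restriction of a $Z$-stabilization coincides with a $Y$-stabilization of $\MAX_{Y}$ with extra grains added. Making this precise requires leaning on the confluence of the toppling process together with the one-directional coupling $A(e)\to Y$ (no edges $Y\to A(e)$); the same one-directional coupling is what lets me ``freeze'' the $A(e)$-part in the first inclusion. Once that bookkeeping is in place, the remaining steps are routine applications of the recurrence characterizations already established for $Y$ and for the acyclic subsystem on $A(e)$.
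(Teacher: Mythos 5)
Your argument is correct, but it takes a genuinely different and more combinatorial route than the paper's. Both you and the paper start from Theorem \ref{ThmBuildTheMaxSubgp}'s description of $G_e$ as $\{g\oplus j\}$, but the paper then finishes in two lines of algebra: for $G_e\subseteq \SPG{\mS(e)}$ it computes $a\oplus e=(g\oplus e)\oplus j=g\oplus j=a$ and invokes part \ref{ThmMPlusEEqM} of Theorem \ref{thmThreePartRecurrence} on the graph $\mS(e)$ (using that $e$ is the identity of the sandpile group there), and for the reverse inclusion it simply observes that $\SPG{\mS(e)}$ is a subgroup of $\M$ containing $e$, hence lies in $G_e$ by the maximality clause of Corollary \ref{ThmCliffPresMaxSubgp} --- no toppling analysis at all. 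You instead re-derive both inclusions at the level of toppling dynamics, exploiting the one-directional coupling from $A(e)$ into $\mi(\supp(e))$ (which you correctly justify from the closure property) and the abelian property to split accessibility into an $A(e)$-stage and a $\mi(\supp(e))$-stage. This costs you the bookkeeping you flag --- interleaving grain additions with topplings is the standard strong form of the abelian property and does go through, including the minor point that the witness $w=\MAX_Y$-to-$g$ need not be stable but can be delivered in stable installments --- and it buys a self-contained verification that the recurrent states of $\mS(e)$ are exactly the set of Theorem \ref{ThmBuildTheMaxSubgp}; in particular your argument establishes, rather than assumes, that $e$ is the identity of $\SPG{\mS(e)}$, a fact the paper uses without comment. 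Your reverse inclusion, however, is much heavier than necessary: the paper's one-line appeal to maximality would have worked verbatim and spared you the restriction-of-stabilization argument entirely.
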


\begin{proof}We must show that $G_e$, as described in Theorem \ref{ThmBuildTheMaxSubgp}, is the set of recurrent elements of the sandpile monoid on $\mS(e)$. Let $a\in G_e$. Then $a=g\oplus j$ where $g$ is recurrent on $\mi(\supp(e))$ and $\supp(j)\subseteq A(e)$. By definition, $a$ is in the sandpile monoid of $\mS(e)$. Since $e$ is the identity of the sandpile group on $\mi(\supp(e))$, $a\oplus e=(g\oplus e)\oplus j=g\oplus j = a$, and since $e$ is the identity of the sandpile group on $\mS(e)$, by part \ref{ThmMPlusEEqM} of Theorem \ref{thmThreePartRecurrence}, $a$ is recurrent in the sandpile monoid on $\mS(e)$. For the reverse inclusion, the sandpile group of $\mS(e)$ is a subgroup of $\M$ which contains $e$, so it is contained in $G_e$ by the maximality of $G_e$.
\end{proof}

\begin{eg} (Example \ref{TheExample} continued) Consider the graph $\X$ in Figure \ref{IdempotentExample}. Corresponding to the idempotents $e_0$ through $e_5$ of Example \ref{TheExample} we have the following maximal subgroups:
\begin{itemize}
	\item $G_{e_0}=G_0$ is the sandpile group on $\{a_2,d_1,d_2,d_4,\textup{sink}\}$.
	\item $G_{e_1}$ is the sandpile group on $B\cup\{a_1,a_2,d_1,d_2,d_4,\textup{sink}\}$.
	\item $G_{e_2}$ is the sandpile group on $D\cup\{a_2,d_1,d_2,d_4,\textup{sink}\}$. Note $A(e_2)=\{a_2,d_4\}$.
	\item $G_{e_3}$ is the sandpile group on $B\cup D\cup\{a_1,a_2,d_1,d_2,d_4,\textup{sink}\}$.
	\item $G_{e_4}$ is the sandpile group on $C\cup D\cup\{a_2,a_3,d_1,d_2,d_4,\textup{sink}\}$.
	\item $G_{e_5}$ is the sandpile group of $\X$. Note $A(e_5)=\{a_1,a_2,a_3\}$.
\end{itemize}
\end{eg}

\section{Classical and two-idempotent sandpile monoids}
\label{SubSecTwoIdemMonoids}

In this section we point out how two results for classical sandpile monoids generalize to directed sandpile monoids with two idempotents. If $\X$ is the graph for a classical sandpile monoid (i.e., $\X$ is undirected and $X$ is connected), the results from Section \ref{SubSecIdemStruct} yield the well-known result that $\M$ has one or two (usually two) idempotents. Let us quickly make this precise.

\begin{thm}
\label{ThmOneOrTwoIdemsForClassical}
Let $\X$ be undirected and $X$ connected. If $X$ consists of a single loopless vertex, then $\M$ has exactly one idempotent (namely, $0$). Otherwise, $\M$ has exactly two idempotents ($0$ and the identity of $\G$).
\end{thm}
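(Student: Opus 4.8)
The plan is to count the idempotents of $\M$ directly via the bijection of Theorem \ref{ThmBijection}, which identifies the idempotents of $\M$ with the filters on the poset of cyclic strong components of $\X$. The whole argument then reduces to determining these components for a classical graph. Since $\X$ is undirected, each undirected edge is a pair of opposing directed edges, so between any two adjacent non-sink vertices $u,v$ the walk $u\to v\to u$ yields a cycle of length $2$ in the sense of Definition \ref{DefCycle}. Moreover, because $X$ is connected and undirected, every pair of vertices in $V$ is mutually reachable using edges of $E$, so all of $V$ forms a single strongly connected component.

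First I would dispose of the degenerate case. If $X$ is a single loopless vertex $v$, then $v$ lies on no cycle (it has no loop and no second vertex with which to form a $2$-cycle), so $X$ contains no cycles and the poset of cyclic strong components is empty. The empty poset has exactly one filter, namely $\varnothing$, so Theorem \ref{ThmBijection} gives exactly one idempotent; alternatively this is immediate from the equivalence in part \ref{thmMGAcyclic} of Theorem \ref{BToumpMainThm1}. That unique idempotent is $0$, which is always idempotent.

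In the remaining case I claim $X$ has exactly one cyclic strong component. If $X$ has at least two vertices, then the single strong component $V$ contains a $2$-cycle and hence is cyclic; and since every vertex has a neighbor, every vertex lies on a $2$-cycle, so there are no acyclic vertices and $V$ is the unique cyclic strong component. If instead $X$ is a single vertex carrying a loop, then that vertex forms a length-$1$ cycle and is itself the unique cyclic strong component. Either way the poset of cyclic strong components is a single point, which has exactly two filters ($\varnothing$ and the whole poset), so Theorem \ref{ThmBijection} yields exactly two idempotents. One is $0$ (the empty filter). For the other, note that since $X$ now contains a cycle, $0$ is not recurrent by part \ref{thmMGAcyclic} of Theorem \ref{BToumpMainThm1}, hence $0\notin\G$ by Theorem \ref{ThmRecIffMinimal}; therefore the identity of $\G$ is a \emph{nonzero} idempotent of $\M$, and being the only nonzero idempotent available it must be the second one.

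I expect the main obstacle to be the combinatorial bookkeeping in the nondegenerate case: one must verify both that the lone strong component is genuinely cyclic (via the $2$-cycle coming from undirectedness, or the loop in the single-vertex subcase) and that no stray acyclic vertices survive to contribute extra filters. Everything else is a routine application of the filter-counting bijection together with the standard equivalences of Theorem \ref{BToumpMainThm1}.
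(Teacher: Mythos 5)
Your proof is correct and takes essentially the same route as the paper: the degenerate case is handled via the acyclicity equivalence in Theorem \ref{BToumpMainThm1}, and the remaining case by showing $X$ has a unique cyclic strong component and invoking the filter bijection of Theorem \ref{ThmBijection}. You simply spell out in more detail why connectivity and undirectedness force a single cyclic strong component (via the $2$-cycles or the loop), which the paper asserts without elaboration.
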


\begin{proof}If $X$ consists of a single loopless vertex, then $X$ has no cycles so $\M$ has only one idempotent by part \ref{thmMGAcyclic} of Theorem \ref{BToumpMainThm1}. On the other hand, if $X$ consists of anything other than a single loopless vertex, then $X$ has a unique cyclic strong component, so by Theorem \ref{ThmBijection} $\M$ has exactly two idempotents, $0$ and the identity of $\G$.
\end{proof}

The case when $X$ consists of a single loopless vertex is relatively uninteresting---in this case, $\M=\G$ is just the cyclic group whose order is the outdegree of that vertex in $\X$. So all classical sandpile monoids of interest have exactly two idempotents. Sometimes the correct setting for generalizations of theorems about classical sandpile monoids to those based on directed graphs is the setting when $\M$ has exactly two idempotents. For example, the following theorem concerns an alternate definition of recurrence sometimes used for classical sandpile monoids.

\begin{thm}
\label{ThmRecurrenceEquiv}
Let $\M$ contain exactly two idempotents and let $u\in \M$. Then there exists $a\in\M$ with $a\neq 0$ such that $u\oplus a=u$ if and only if $u$ is recurrent.
\end{thm}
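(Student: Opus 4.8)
The plan is to derive both implications from part \ref{ThmMPlusEEqM} of Theorem \ref{thmThreePartRecurrence}, which says that $u$ is recurrent if and only if $u\oplus e=u$, where $e$ is the identity of $\G$. The structural fact driving everything is that, since $\M$ has exactly two idempotents, these must be $0$ and $e$; in particular $e\neq 0$, for otherwise $0$ would be recurrent and $\M$ would have a single idempotent by part \ref{thmMGAcyclic} of Theorem \ref{BToumpMainThm1}. Thus the only nonzero idempotent of $\M$ is the group identity $e$, and this is the property I will exploit.

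The reverse implication is immediate: if $u$ is recurrent, then $u\in\G$, so $u\oplus e=u$, and I simply take $a=e$, which is legitimate precisely because $e\neq 0$. For the forward implication, suppose $a\neq 0$ satisfies $u\oplus a=u$. Iterating this identity yields $u\oplus(a\oplus\cdots\oplus a)=u$ for every number of summands, so by Lemma \ref{LemIdempotentClass} I get $u\oplus e_a=u$, where $e_a$ is the idempotent reached by repeatedly adding $a$ to itself. Because $\M$ has only two idempotents, $e_a\in\{0,e\}$. If $e_a=e$, then $u\oplus e=u$ and $u$ is recurrent, as desired, so the entire content of the forward direction is to rule out $e_a=0$.

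To exclude $e_a=0$, I reduce to a subgraph on which the sandpile monoid is a group. If $e_a=0$, then by Theorem \ref{thmEventuallyRecurrent} the identity of the sandpile group of $\mi(\supp(a))$ is $0$, so by part \ref{thmMGAcyclic} of Theorem \ref{BToumpMainThm1} the non-sink part of $\mi(\supp(a))$ is acyclic and $\SPM{\mi(\supp(a))}=\SPG{\mi(\supp(a))}$ is a group whose identity is $0$. Now set $R=\cl(\supp(a))$. Since $R$ is closed under reachability, the extra grains contributed by $a$ stay within $R$, no vertex outside $R$ ever topples, and the avalanche stabilizing $u+a$ is confined to $R$; restricted to $R$ it coincides with the operation of $\SPM{\mi(\supp(a))}$. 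Hence the hypothesis $u\oplus a=u$ restricts to $(u|_R)\oplus a=u|_R$ inside the group $\SPM{\mi(\supp(a))}$, and cancellation forces $a=0$, contradicting $a\neq 0$. Therefore $e_a=e$ and $u$ is recurrent.

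The main obstacle is exactly the case $e_a=0$: there the iteration trick degenerates, since $u\oplus e_a=u$ becomes the vacuous $u\oplus 0=u$, and one must argue with $a$ itself rather than with its idempotent power. The crucial technical point is verifying that stabilizing $u+a$ genuinely localizes to the closed set $\cl(\supp(a))$, so that the equation may be read inside the acyclic subgraph's sandpile monoid, which is a group and therefore admits no nonzero fixing element. It is worth emphasizing that the two-idempotent hypothesis is used essentially in both directions: it supplies $e\neq 0$ for the reverse implication and guarantees that the only nonzero idempotent is $e$ for the forward implication.
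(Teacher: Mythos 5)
Your proof is correct, and the overall skeleton matches the paper's: both directions reduce to part \ref{ThmMPlusEEqM} of Theorem \ref{thmThreePartRecurrence}, the reverse direction takes $a=e$, and the forward direction iterates $u\oplus a=u$ to get $u\oplus e_a=u$ and then must rule out $e_a=0$. Where you genuinely diverge is in how that last exclusion is carried out. The paper quotes Theorem \ref{ThmUPlusAEqU} (proved later by a direct combinatorial argument: choose a vertex $w\in\supp(a)$ not reachable by a nontrivial path from any other vertex of $\supp(a)$, and show $w$ can neither gain nor lose grains during the stabilization of $u+a$, forcing $a_w=0$). You instead observe that $e_a=0$ forces $\mi(\supp(a))$ to be acyclic, so that $\SPM{\mi(\supp(a))}$ is a group with identity $0$; you then localize the equation $u\oplus a=u$ to the closed set $\cl(\supp(a))$ and cancel. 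The localization step is sound: since $u$ is stable, every initially unstable vertex of $u+a$ lies in $\supp(a)$, and closure of $\cl(\supp(a))$ under out-edges confines the entire avalanche to that set, so the restricted equation really does live in $\SPM{\mi(\supp(a))}$. Your route is more algebraic and self-contained for this theorem, effectively reproving the special case of Theorem \ref{ThmUPlusAEqU} that is needed here via group cancellation; the paper's route isolates the combinatorial content as a standalone constraint on sandpile monoids (Theorem \ref{ThmUPlusAEqU}), which it then reuses in Section \ref{SubSecMonoidsNotSPM} to exhibit monoids that are not sandpile monoids. If you wanted your argument to subsume the paper's, note that your localization-plus-cancellation step proves Theorem \ref{ThmUPlusAEqU} in full generality, not just under the two-idempotent hypothesis.
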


\begin{rmka}This theorem fails if we remove the hypothesis that $\M$ has exactly two idempotents. If $\M$ has one idempotent, $\M=\G$, so $0$ is the identity of $\G$ and $u\oplus a=u$ for some $a,u\in\M$ implies $a=0$. If $\M$ has more than 2 idempotents then there is an idempotent $a\in\M$ separate from $0$ and the identity of $\G$, $a$ being idempotent means $a\oplus a=a$, and $a$ is not recurrent.
\end{rmka}

To prove Theorem \ref{ThmRecurrenceEquiv} we will use the following theorem, whose proof we defer until Section \ref{SubSecMonoidsNotSPM}. 

\begin{thma}(Theorem \ref{ThmUPlusAEqU}) Let $u,a\in\M$ such that $u\oplus a=u$. If $a\neq 0$, then $a$ cannot access 0.
\end{thma}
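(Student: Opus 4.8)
The plan is to argue by contradiction: assume $u\oplus a=u$, that $a\neq 0$, and that $a$ \emph{can} access $0$, and derive the contradiction $a=0$. The first step is to translate the accessibility hypothesis into combinatorial information about $\X$. Since $a$ can access $0$, part \ref{LemZeroAccCycle} of Theorem \ref{BToumpMainThm1} tells us that $a$ carries no sand on any vertex from which a cycle of $X$ is reachable. Consequently no vertex of $\cl(\supp(a))$ lies on a cycle, so the induced subgraph $Y=\mi(\supp(a))$ is acyclic. By part \ref{thmMGAcyclic} of Theorem \ref{BToumpMainThm1}, the sandpile monoid $\SPM{Y}$ then coincides with its sandpile group and is therefore a group; its identity, being the identity of the monoid $\SPM{Y}$, is the empty configuration $0$.

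Next I would localize the relation $u\oplus a=u$ to $Y$. Set $C=\cl(\supp(a))$. The key observation is that $C$ is closed under passing to out-neighbors (immediate from Definition \ref{DefClosure}), so stabilizing $u+a$ produces an avalanche that can never escape $C$: the only initially unstable vertices lie in $\supp(a)\subseteq C$, and toppling a vertex of $C$ deposits grains only onto $C\cup\{\textup{sink}\}$. Hence vertices outside $C$ receive no sand and never topple, so $u\oplus a$ and $u$ agree off $C$, and on $C$ the stabilization is identical to the one carried out inside $Y$. Writing $\bar u$ for the restriction of $u$ to $C$ (a state of $\SPM{Y}$, since the out-degrees in $Y$ and in $\X$ agree for vertices of the forward-closed set $C$), this yields $\bar u\oplus a=\bar u$ in $\SPM{Y}$.

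To conclude I would invoke the group structure of $\SPM{Y}$: in a group the relation $\bar u\oplus a=\bar u$ forces $a$ to equal the identity $0$, contradicting $a\neq 0$.

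The main obstacle is the localization step: one must check carefully that stabilizing $u+a$ in $\X$ never disturbs any vertex outside $\cl(\supp(a))$, and that the resulting toppling sequence is a legitimate toppling sequence for $\bar u+a$ in $Y$, so that the two stabilizations genuinely coincide on $C$. This rests on $\cl(\supp(a))$ being forward-closed and on out-degrees being preserved under passage to $\mi(\supp(a))$; once these are in hand the group-cancellation step is immediate. As a sanity check on why the hypothesis $u\oplus a=u$ is essential and where the difficulty concentrates: were the idempotent power $e_a$ of $a$ nonzero, part \ref{CorGInGOneGrainEveryCycle} of Theorem \ref{BToumpMainThm1} would place a grain of $e_a$ on a cycle inside $\cl(\supp(a))$, so $a$ could not access $0$ at all; the genuine work therefore lies precisely in the acyclic case handled above.
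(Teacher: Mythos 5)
Your proof is correct, but it takes a genuinely different route from the paper's. The paper argues directly and locally: it picks a vertex $w\in\supp(a)$ that is not reachable by any nontrivial path from $\supp(a)$ (such a $w$ exists, else $\supp(a)$ would meet a cycle), observes that the avalanche stabilizing $u+a$ can neither deposit sand on $w$ nor (because $a_w,u_w<\outdeg(w)$ and $u\oplus a=u$) remove sand from $w$, and concludes $(u\oplus a)_w=u_w+a_w$, forcing $a_w=0$ --- a contradiction. You instead globalize: you note that accessibility of $0$ from $a$ makes $\cl(\supp(a))$ cycle-free, so $\SPM{\mi(\supp(a))}$ is a group with identity $0$ by part \ref{thmMGAcyclic} of Theorem \ref{BToumpMainThm1}, then localize the relation $u\oplus a=u$ to that subgraph and cancel. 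Your localization step is the real content and you justify it correctly: since $\cl(\supp(a))$ is forward-closed and $u,a$ are individually stable, only vertices of $\cl(\supp(a))$ ever topple in stabilizing $u+a$, the avalanche never leaves that set, and out-degrees are preserved in $\mi(\supp(a))$, so the restriction $\bar u$ satisfies $\bar u\oplus a=\bar u$ in the group $\SPM{\mi(\supp(a))}$. What each approach buys: the paper's argument is elementary and self-contained, using only part \ref{LemZeroAccCycle} of Theorem \ref{BToumpMainThm1} and a direct count of grains at one vertex; yours is conceptually cleaner and shorter once the localization is granted, but it leans on the heavier imported fact that acyclic supports yield sandpile monoids equal to their sandpile groups. (Your closing "sanity check" cites part \ref{CorGInGOneGrainEveryCycle} somewhat loosely --- that statement concerns $\G$ itself rather than the recurrent elements of $\mi(\supp(a))$ --- but it is only a side remark and does not affect the proof.)
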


\begin{proof}[Proof of Theorem \ref{ThmRecurrenceEquiv}] If $\M$ contains exactly two idempotents then they are the identity of $\G$, which we denote by $e$, and $0$. 

If $u\oplus a=u$ for some $a\in \M$ with $a\neq 0$, then by Theorem \ref{ThmUPlusAEqU}, $a$ cannot access 0. We also have
\[
u=u\oplus a=u\oplus a\oplus a = \cdots = u\oplus e_a.
\]
Since $a$ cannot access $0$ and the only other idempotent in $\M$ is $e$, we have $e_a=e$. Thus $u\oplus e=u$, so $u$ is recurrent by part \ref{ThmMPlusEEqM} of Theorem \ref{thmThreePartRecurrence}.

On the other hand, if $u$ is recurrent then $u\oplus e=u$.
\end{proof}

Second, for classical sandpile monoids it is well known that the process of adding a non-zero element to itself and stabilizing repeatedly eventually produces a recurrent configuration.
For any directed graph $\X$, if $\M$ has only one idempotent, this is immediate (as all elements are recurrent), and if $\M$ has two idempotents, we have the following generalization.

\begin{thm}
\label{ThmEventuallyRecurrent}
Let $\M$ have exactly two idempotents and let $a\in \M$. If $a$ has a grain of sand on a vertex from which a cycle in $X$ is accessible, then adding $a$ to itself and stabilizing repeatedly will eventually reach a recurrent configuration. Conversely, if $a$ has no sand on any vertex from which a cycle in $X$ is accessible, then adding $a$ to itself and stabilizing repeatedly will never reach a recurrent configuration.
\end{thm}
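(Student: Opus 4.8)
The plan is to track the idempotent $e_a$ that the sequence $a, a\oplus a, a\oplus a\oplus a,\ldots$ converges to (it exists and recurs infinitely often by Lemma~\ref{LemIdempotentClass}) and to use the two-idempotent hypothesis to pin down exactly which idempotent it is. First I would record that having exactly two idempotents forces the idempotent set to be $\{0,e\}$, where $e$ is the identity of $\G$; by Theorem~\ref{ThmBijection} this happens precisely when $X$ has a single cyclic strong component, so in particular $X$ contains at least one cycle. The whole argument then reduces to deciding, from the support of $a$, whether $e_a=e$ (recurrent) or $e_a=0$ (not recurrent), and to ruling out the possibility that some earlier term of the sequence is recurrent even when $e_a=0$.

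For the forward direction, suppose $a$ has a grain on a vertex from which a cycle in $X$ is accessible. By the contrapositive of part~\ref{LemZeroAccCycle} of Theorem~\ref{BToumpMainThm1}, $0$ is not accessible from $a$. Since $e_a$ is of the form $a^{\oplus k}$ and is therefore accessible from $a$, we must have $e_a\neq 0$, hence $e_a=e$. As $e$ is the identity of $\G$ it is recurrent (Theorem~\ref{ThmRecIffMinimal}), and it occurs in the sequence by Lemma~\ref{LemIdempotentClass}; so the sequence reaches a recurrent configuration.

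For the converse, let $W\subseteq V$ denote the set of vertices from which no cycle in $X$ is accessible, and assume $\supp(a)\subseteq W$. The key observation is that $W$ is closed under out-edges: if $v\in W$ and $v\to w$ along an edge of $E$, then any cycle accessible from $w$ would also be accessible from $v$, so $w\in W$ (edges from $W$ may additionally point to the sink). Consequently, adding states supported in $W$ and stabilizing only ever topples vertices of $W$ and sends grains either to other vertices of $W$ or into the sink, so $\supp(a^{\oplus n})\subseteq W$ for every $n$. Since $X$ has a cycle and every element of $\G$ carries a grain on every cycle by part~\ref{CorGInGOneGrainEveryCycle} of Theorem~\ref{BToumpMainThm1}, no state supported in $W$ can lie in $\G$; thus no $a^{\oplus n}$ is recurrent.

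The step I expect to require the most care is the support-invariance claim in the converse---namely that stabilization never pushes a grain out of $W$ onto a vertex from which a cycle is accessible. This hinges on the edge-closedness of $W$ together with the fact that an avalanche only topples unstable vertices, all of which start in $W$ and keep their out-neighbors in $W\cup\{\textup{sink}\}$. I would also double-check the clean dichotomy guaranteed by the two-idempotent hypothesis, since both the existence of a cycle in $X$ (needed in the converse) and the exhaustiveness of the two cases rely on Theorem~\ref{ThmBijection}.
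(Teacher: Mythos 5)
Your proposal is correct and follows essentially the same route as the paper: the forward direction uses part (1) of Theorem \ref{BToumpMainThm1} to rule out $e_a=0$ and the two-idempotent hypothesis to force $e_a$ to be the identity of $\G$, and the converse uses the fact that sand never leaves the set of vertices from which no cycle is accessible together with part (3) of Theorem \ref{BToumpMainThm1}. Your write-up merely makes explicit two points the paper leaves implicit (that $e_a$ actually occurs in the sequence, and the out-edge-closedness of the cycle-avoiding vertex set), which is fine.
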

\begin{proof}
If $a$ has a grain of sand on a vertex from which a cycle in $X$ is accessible, by part \ref{LemZeroAccCycle} of Theorem \ref{BToumpMainThm1} $a$ cannot access $0$. Since $\M$ has only two idempotents ($0$ and the identity of $\G$), $e_a$ must be the identity of $\G$.

For the converse, since $\M$ has two idempotents $X$ must contain at least one cycle. If $a$ has no sand on any vertex from which a cycle in $X$ is accessible, then the process of adding $a$ to itself and stabilizing repeatedly will never transfer any sand to a cycle. The converse thus follows from part  \ref{CorGInGOneGrainEveryCycle} of Theorem \ref{BToumpMainThm1}.
\end{proof}

In fact, this theorem is easy to generalize beyond two-idempotent sandpile monoids. Recall from Section \ref{SubSecIdemStruct} that the number of idempotents of $\M$ is equal to the number of filters on the set of cyclic strong components of $\X$.

\begin{thm}Let $a\in \M$. If $\mi(\supp(a))$ contains every cycle of $X$ then adding $a$ to itself and stabilizing repeatedly will eventually reach a recurrent configuration. Conversely, if there is a cycle of $X$ not contained in $\mi(\supp(a))$ then adding $a$ to itself and stabilizing repeatedly will never reach a recurrent configuration.
\end{thm}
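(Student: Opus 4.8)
The plan is to prove this by showing that the relevant idempotent $e_a$ equals the identity $e$ of $\G$ precisely when $\mi(\supp(a))$ contains every cycle of $X$, and then invoking the recurrence criterion of Theorem \ref{thmThreePartRecurrence}. The key observation, already established in the excerpt, is that $e_a$ is the idempotent reached by adding $a$ to itself and stabilizing repeatedly (Lemma \ref{LemIdempotentClass}), and that by Theorem \ref{thmEventuallyRecurrent} together with Remark \ref{RmkNoInitialAbnormalForIdem}, the support of $e_a$ intersects exactly those cyclic strong components of $\X$ that are accessible from $\supp(a)$. Thus the entire argument reduces to identifying which cyclic strong components appear in $e_a$ and comparing against the full collection of cyclic strong components of $\X$.

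First I would handle the forward direction. Suppose $\mi(\supp(a))$ contains every cycle of $X$. Every cycle of $X$ lies in some cyclic strong component, so every cyclic strong component of $\X$ is contained in $\mi(\supp(a))$, which means it is accessible from $\supp(a)$. By Theorem \ref{ThmBijection}, the idempotent $e_a$ corresponds to a filter on the set of cyclic strong components, and by the discussion in Remark \ref{RmkNoInitialAbnormalForIdem} this filter consists precisely of the cyclic strong components accessible from $\supp(a)$. Since that is now \emph{all} of them, $e_a$ corresponds to the maximal filter (the whole poset), which is exactly the filter for the identity $e$ of $\G$. Hence $e_a = e$, so the sequence $a, a\oplus a, a\oplus a\oplus a,\ldots$ eventually reaches $e$, a recurrent state.

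For the converse, suppose there is a cycle $\gamma$ of $X$ not contained in $\mi(\supp(a))$. Then the cyclic strong component $C$ containing $\gamma$ is not accessible from $\supp(a)$, so no element in the sequence $a, a\oplus a, \ldots$ ever places a grain of sand on $C$ (toppling only moves sand forward along edges, and there is no path from $\supp(a)$ into $C$). In particular every term of the sequence---including the eventual idempotent $e_a$---has no grain on the cycle $\gamma$. By part \ref{CorGInGOneGrainEveryCycle} of Theorem \ref{BToumpMainThm1}, every recurrent element must carry at least one grain on every cycle of $X$; since no term of the sequence has a grain on $\gamma$, no term of the sequence can be recurrent.

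\emph{The main obstacle} is making precise the claim that no sand ever reaches a cyclic strong component that is inaccessible from $\supp(a)$, since one must verify this is preserved under the $\oplus$ operation (pointwise addition followed by stabilization) across every term of the sequence, not merely the first. This follows from the fact that a toppling vertex sends grains only along its outward edges, so the set of vertices holding sand can only grow within the set of vertices reachable from the current support---but this should be stated carefully, as it is the combinatorial heart of both directions and is exactly the mechanism already used implicitly in the proof of Theorem \ref{ThmEventuallyRecurrent}.
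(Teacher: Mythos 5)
Your proof is correct and follows essentially the same route as the paper's: identify $e_a$ via Theorem \ref{ThmBijection} and Remark \ref{RmkNoInitialAbnormalForIdem} as the idempotent whose filter is the set of cyclic strong components accessible from $\supp(a)$, conclude $e_a=e$ when that set is everything, and for the converse observe that sand never reaches an inaccessible cycle and invoke part \ref{CorGInGOneGrainEveryCycle} of Theorem \ref{BToumpMainThm1}. The only difference is cosmetic: the paper phrases the forward direction as ``$e$ is the only idempotent whose support intersects every cyclic strong component,'' while you phrase it in terms of the maximal filter---these are the same argument.
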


\begin{proof}Let $e$ denote the identity of $\G$. By part  \ref{CorGInGOneGrainEveryCycle} of Theorem \ref{BToumpMainThm1}, $e$ has a grain of sand on every cycle of $X$, so $\supp(e)$ intersects every cyclic strong component of $\X$. By Theorem \ref{ThmBijection}, $e$ is the only idempotent of $\M$ whose support intersects every cyclic strong component of $\X$. If $\mi(\supp(a))$ contains every cycle of $X$, then $e_a$ also has this property, so $e_a=e$. On the other hand, if there is a cycle of $X$ not contained in $\mi(\supp(a))$ then adding $a$ to itself and stabilizing repeatedly will never transfer sand to this cycle. The converse follows from part  \ref{CorGInGOneGrainEveryCycle} of Theorem \ref{BToumpMainThm1}.
\end{proof}


\section{Monoids that are not sandpile monoids}
\label{SubSecMonoidsNotSPM}

In this section we give a new algebraic structural constraint for sandpile monoids and we exhibit two infinite families of finite commutative monoids which cannot be realized as sandpile monoids on any graph.

Every finite commutative group is isomorphic to the sandpile group of some graph. To see this, let $\X_k$ denote the graph which consists of two vertices (one ordinary vertex and the sink) and $k$ edges from the vertex to the sink. Then $\SPG{\X_k}=\SPM{\X_k}\cong\Z_k$. If we create the graph $\X$ by identifying the sinks of $\X_{k_1},\X_{k_2},\ldots,\X_{k_n}$, then 
\[
\SPM{\X}=\SPG{\X}\cong\Z_{k_1}\times \Z_{k_2} \times \cdots \times \Z_{k_n}.
\]
Thus, we also see that every finite commutative group is isomorphic to the sandpile {\em monoid} of some graph.

\begin{rmka}
Another way to prove that every finite commutative group is isomorphic to the sandpile group of some graph is by identifying undirected cycles at their sinks, as the sandpile group of the undirected cycle $C_n$ is $\Z_n$ \cite{DualGraphs}.
\end{rmka}

A natural question, then, is whether every finite commutative {\em monoid} is the sandpile monoid of some graph. The answer is no, and in this section we give two infinite families of monoids that cannot be realized as sandpile monoids on any graph. Here is our first family.

\begin{thm}
\label{ThmChainOfpElts}
Let $p$ be a prime number with $p>2$, and let $M_p$ denote a linearly ordered set of $p$ elements under the meet operation. Then $M_p$ is an idempotent commutative monoid which is not a sandpile monoid.\end{thm}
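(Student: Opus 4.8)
First I would understand what $M_p$ looks like. We have elements, say, $x_1 < x_2 < \cdots < x_p$ under the meet (minimum) operation, so $x_i \wedge x_j = x_{\min(i,j)}$. The bottom element $x_1$ is an absorbing element (it acts as $0$ in the monoid, since $x_1 \wedge x_j = x_1$), and the top element $x_p$ is the identity (since $x_p \wedge x_j = x_j$). Every element is idempotent, so if $M_p$ were a sandpile monoid $\M$, then by part (4) of Theorem \ref{BToumpMainThm1} (the equivalence of "$\M$ has one idempotent" with "$X$ is acyclic") we could not be in the one-idempotent case unless $p=1$; with $p$ idempotents, Theorem \ref{ThmBijection} says $X$ must have exactly $p-1$ cyclic strong components whose filters number $p$. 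The plan is to derive a contradiction from the linear chain structure.

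**The main combinatorial obstruction.** The key observation is that the idempotents of $\M$ correspond bijectively to \emph{filters} on the poset $P$ of cyclic strong components (Theorem \ref{ThmBijection}), and the meet operation on $M_p$ must match $\oplus$ on idempotents. For idempotents $e, f$ one checks that $e \oplus f$ is the idempotent whose filter is $F(e) \cup F(f)$ (adding the supports forces every component accessible from either to be saturated). So the monoid of idempotents under $\oplus$ is isomorphic to the lattice of filters of $P$ under \emph{union}. For $M_p$ to arise, this lattice of filters must be a chain of length $p$ under union, i.e., the filters of $P$ must be totally ordered by inclusion and number exactly $p$. I would show that a poset $P$ whose filters form a chain of length $p$ under inclusion must itself be a chain (a totally ordered set) with $p-1$ elements: each "step up" in the filter chain adds exactly one new element, and incomparable elements would produce incomparable filters. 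Thus $P \cong \{C_1 < C_2 < \cdots < C_{p-1}\}$.

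**Forcing a group constraint and finding the contradiction.** Once $P$ is a chain of $p-1$ cyclic strong components, the two smallest nonzero idempotents correspond to the filters $\{C_{p-1}\}$ (the top component alone) and to successively larger up-sets. Here is where the prime hypothesis $p>2$ should enter. The idempotent $e$ just above $0$ in the chain is the identity of the sandpile group on $\mi(\supp(e))$, which is the sandpile monoid on the single top cyclic strong component sitting over the sink; this sandpile group is nontrivial because the component is cyclic. But in $M_p$, the maximal subgroup $G_e$ containing a given idempotent $e$ is forced to be \emph{trivial}: since $M_p$ is a chain under meet, the only unit of the monoid $\{e \wedge m : m \in M_p\}$ is $e$ itself. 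By Corollary \ref{ThmCliffPresMaxSubgp} and Theorem \ref{ThmNonzeroIdemMaxSubgroup}, $G_e$ equals the sandpile group on $\mS(e)$, which must be nontrivial once there is a cycle present. This contradiction---$G_e$ trivial algebraically but nontrivial as a sandpile group on a graph with a cycle---is what kills $M_p$. The role of $p$ being an odd prime is presumably to rule out the borderline small cases: $p=2$ is exactly the classical two-idempotent monoid $\{0, e\}$ which \emph{is} realizable (Theorem \ref{ThmOneOrTwoIdemsForClassical}), so the argument must genuinely use $p \geq 3$ to guarantee an intermediate idempotent $e \neq 0$ whose maximal subgroup is forced trivial by the chain yet nontrivial by the presence of a cycle.

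**Anticipated obstacle.** The hardest step will be pinning down exactly how the chain structure of $M_p$ under meet forces \emph{both} that every cyclic strong component contributes a nontrivial sandpile group \emph{and} that each $G_e$ must be trivial, and reconciling this with the possibility that some components could be single vertices with a loop (whose sandpile group might be trivial or not depending on edge multiplicities). I would need to verify carefully that a cyclic strong component always yields a nontrivial maximal subgroup---equivalently that $G_e \neq \{e\}$ whenever $\mS(e)$ contains a cycle---so that the contradiction with the chain's triviality is airtight. Checking this edge case (loops, edge multiplicities) is the delicate part; the prime condition likely serves precisely to avoid a coincidental collapse in the smallest cases.
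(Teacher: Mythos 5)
There is a genuine gap at the final step of your argument. Your contradiction rests on the claim that $G_e$ ``equals the sandpile group on $\mS(e)$, which must be nontrivial once there is a cycle present.'' This is false: take a single non-sink vertex with one loop and one edge to the sink. It has out-degree $2$, its only stable states are $0$ and $1_v$, and $1_v\oplus 1_v=1_v$, so its sandpile group is trivial even though the graph contains a cycle. (Indeed this graph realizes $M_2$ as a sandpile monoid, which is precisely why the theorem excludes $p=2$.) You flag this verification as the ``delicate part'' but do not supply it, and without it the argument produces no contradiction. Note also that you never actually use the primality of $p$ anywhere concrete; it is essential. The structural observations you do make are correct as far as they go --- in an idempotent commutative monoid every maximal subgroup is trivial, and a poset whose filters form a chain under inclusion is itself a chain --- but they do not suffice.

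The paper's proof is a short counting argument that bypasses all of this machinery. By \eqref{EqSizeOfM}, $|\M|=\prod_{v\in V}\outdeg(v)$; if this equals the prime $p$, then exactly one vertex $v$ has $\outdeg(v)=p$ and every other vertex has out-degree $1$. The configuration $1_v$ with one grain on $v$ is stable, and idempotency of $\M$ forces $1_v\oplus 1_v=1_v$, which requires $1_v+1_v$ to be unstable, i.e.\ $\outdeg(v)\le 2$, contradicting $\outdeg(v)=p>2$. If you wish to rescue your structural route, you must import this counting fact: your chain analysis gives $p-1\ge 2$ pairwise disjoint cyclic strong components, each of which must contain a vertex able to hold a grain of sand (apply part \ref{CorGInGOneGrainEveryCycle} of Theorem \ref{BToumpMainThm1} to the identity of $\G$), yet only the single vertex of out-degree $p$ can hold any sand, since out-degree-$1$ vertices are always empty in stable states. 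Either way, the heart of the proof is the factorization of $|\M|$, which is absent from your write-up.
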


\begin{proof}
It is immediate that $M_p$ is an idempotent commutative monoid, and $|M_p|=p$ by definition.

Suppose $M_p$ is isomorphic to the sandpile monoid of some graph $\X$. Then $|\SPM{\X}|=p$ and thus, by \eqref{EqSizeOfM}, $X$ has exactly one vertex $v$ with $\outdeg(v)=p$ and the rest of the vertices in $X$ have out-degree 1. Furthermore, $\M$ is idempotent. 

Let $1_v$ denote the configuration on $\X$ consisting of one grain of sand on $v$ and none elsewhere. Then, since $\M$ is idempotent, $1_v\oplus 1_v = 1_v$, meaning that $1_v+1_v$ is an unstable configuration, which implies that $\outdeg(v)\leq 2$, a contradiction. 
\end{proof}

Next we prove Theorem \ref{ThmUPlusAEqU}, which provides an algebraic constraint that every sandpile monoid must satisfy.

\begin{thm}
\label{ThmUPlusAEqU}
Let $u,a\in\M$ such that $u\oplus a=u$. If $a\neq 0$, then $a$ cannot access 0.
\end{thm}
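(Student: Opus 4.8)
The plan is to argue by contradiction: suppose $u\oplus a=u$ with $a\neq 0$, and suppose in addition that $a$ can access $0$; I will derive $a=0$. First I would convert the hypothesis that $a$ accesses $0$ into combinatorics via part \ref{LemZeroAccCycle} of Theorem \ref{BToumpMainThm1}: $a$ carries no grain on any vertex from which a cycle of $X$ is accessible. Since accessibility in $X$ is transitive, the same holds for every vertex of $W:=\cl(\supp(a))$, so $W$ induces no cycle and the subgraph $Y:=\mi(\supp(a))$ is acyclic. By part \ref{thmMGAcyclic} of Theorem \ref{BToumpMainThm1}, $\SPM{Y}$ is then a group, and its unique idempotent---hence its identity---is $0$.

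The crucial step is a confinement claim: when we stabilize $u+a$ on $\X$, only vertices of $W$ ever topple, and sand moves only within $W\cup\{\textup{sink}\}$. I would prove this by induction along a toppling sequence, using two facts. Because $W=\cl(\supp(a))$ is downstream-closed, every out-edge of a vertex of $W$ lands in $W$ or in the sink; and because $a$ adds grains only inside $W$, the configuration $u+a$ agrees with the stable configuration $u$ outside $W$. Inductively, then, no vertex outside $W$ ever receives a grain or becomes unstable, so the next vertex to topple is always in $W$, and toppling it keeps all sand inside $W\cup\{\textup{sink}\}$.

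Granting confinement, I would observe that for $w\in W$ all out-edges of $w$ in $\X$ already lie in $Y$, so out-degrees and out-neighbours of $W$-vertices agree in $\X$ and in $Y$; in particular $a$ and the restriction $u|_W$ are genuine stable states of $Y$, and the toppling dynamics on $W$ are identical in the two graphs. Therefore the restriction of $u\oplus a$ to $W$ coincides with $(u|_W)\oplus a$ computed in $\SPM{Y}$. Restricting the hypothesis $u\oplus a=u$ to $W$ thus yields $(u|_W)\oplus a=u|_W$ inside the group $\SPM{Y}$. Cancelling $u|_W$ forces $a$ to equal the identity of $\SPM{Y}$, namely $0$; since $a$ is literally the same configuration in $\SPM{Y}$ and in $\M$, we get $a=0$, contradicting $a\neq 0$.

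I expect the confinement claim to be the main obstacle, as it is the only place where the downstream-closedness of $W$ and the acyclicity of $Y$ are genuinely used, and it is what lets me transport the equation $u\oplus a=u$ from the non-group monoid $\M$ into the \emph{group} $\SPM{Y}$, where cancellation is available. The remaining steps are routine: the combinatorial translation of ``accesses $0$'' and the matching of out-degrees between $\X$ and $Y$.
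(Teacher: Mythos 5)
Your proof is correct, but it takes a genuinely different route from the paper's. You confine the stabilization of $u+a$ to the downstream-closed set $W=\cl(\supp(a))$, observe that $Y=\mi(\supp(a))$ is acyclic because no vertex of $\supp(a)$ (hence of $W$) reaches a cycle (part \ref{LemZeroAccCycle} of Theorem \ref{BToumpMainThm1}), invoke part \ref{thmMGAcyclic} of that theorem to conclude $\SPM{Y}$ is a group with identity $0$, and then cancel in that group to force $a=0$. The paper instead stays entirely local and elementary: it picks an ``extremal'' vertex $w\in\supp(a)$ not reachable from $\supp(a)$ by any path of length at least $1$ (such a $w$ exists precisely because $\supp(a)$ meets no cycle), notes that $w$ can neither receive nor shed sand during the stabilization of $u+a$ (shedding would force $(u\oplus a)_w<u_w$ since $a_w,u_w<\outdeg(w)$), and concludes $(u\oplus a)_w=u_w+a_w$, whence $a_w=0$, a contradiction. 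Both arguments rest on essentially the same confinement observation --- sand only moves downstream from $\supp(a)$ --- but your version converts the hypothesis into an equation in a genuine group and cancels, at the price of the restriction-commutes-with-stabilization bookkeeping and the heavier part \ref{thmMGAcyclic}, while the paper's version needs only a single-vertex grain count and part \ref{LemZeroAccCycle}. Your approach has the mild aesthetic advantage of explaining \emph{why} the statement is true algebraically ($a$ would have to be the identity of the acyclic subgraph's sandpile group); the paper's is shorter and self-contained.
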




\begin{proof}
Suppose that $u,a\in\M$, $u\oplus a=u,$ and $a\neq 0$ (i.e., $\supp(a)$ is nonempty). Suppose further that $a$ can access $0$. Then for all $w\in \supp(a)$, by part \ref{LemZeroAccCycle} of Theorem \ref{BToumpMainThm1}, there is no path from $w$ to any cycle in $X$. 

Let $w$ denote any member of $\supp(a)$ for which there is no path of length 1 or greater in $X$ from any member of $\supp(a)$ to $w$. To see that such a $w$ exists, suppose not. Then we have $w_1\in \supp(a)$, to which there is a path from $w_2 \in \supp(a)$, to which there is a path from $w_3\in \supp(a)$, etc. Since $\supp(a)$ is finite, we eventually get a cycle in $X$ that contains a member of $\supp(a)$, a contradiction.

Let $a_w$, $u_w$, and $(u\oplus a)_w$ indicate the number of grains of sand the configurations $a$, $u$, and $u\oplus a$, respectively, have on the vertex $w$. We have $u_w=(u\oplus a)_w$ and since $w\in \supp(a)$ and both $a$ and $u$ are stable we also have 
\begin{align}
\label{EqPfw}
1\leq a_w < \outdeg(w), \\
\notag
0\leq u_w < \outdeg(w).
\end{align}
Now, note that the only vertices that may have grains of sand transferred to them by topplings in the process of stabilizing $u+a$ are vertices to which there is a path in $X$ from some member of $\supp(a)$. Thus, during the stabilization of $u+a$, $w$ will not have any grains of sand transferred to it through toppling. Therefore, no grains may be transferred away from $w$ in the stabilization of $u+a$ either, for if some were then we would have $(u\oplus a)_w < u_w$ by (\ref{EqPfw}) (in particular, $a_w,u_w<\outdeg(w)$), which would imply $u\oplus a\neq u$.

Now, since no grains of sand are transferred to or from $w$ in the stabilization of $u+a$, we have $(u\oplus a)_w = u_w+a_w$. Since $(u\oplus a)_w=u_w$, we have $a_w=0$, a contradiction.
\end{proof}

\begin{rmk}
\label{RemarkMonoidNotSPMTest}
Theorem \ref{ThmUPlusAEqU}, said another way, says that if $(M,+)$ is a finite commutative monoid with identity $0$ and elements $u,a,k$ satisfying $u+a=u$, $a\neq 0$, and $a+k=0$, then $M$ cannot be realized as a sandpile monoid on any graph. This theorem gives a straightforward way to show that a monoid cannot be a sandpile monoid, although there are monoids---for example, those in Theorem \ref{ThmChainOfpElts}---which cannot be sandpile monoids that are not detected by this test.
\end{rmk}

%

Now, if $\M$ is a classical sandpile monoid, then $\M$ contains exactly one or two idempotents, so another question is whether every finite commutative monoid containing exactly one or two idempotents is isomorphic to the sandpile monoid of some graph. First, if $M$ is a finite commutative monoid with exactly one idempotent, then $M$ is actually a group, so the answer in this case is yes. However, as the next theorem shows, there are infinitely many finite commutative monoids with exactly two idempotents that are not the sandpile monoids of any (directed or undirected) graph.

\begin{thm}
\label{ThmTwoIdemsNotSPM}
Let $n>2$ and let $G=\{0,1,\ldots,n-2\}$ be the cyclic group of order $n-1$ (with operation denoted by $+$). Let $(M,+)$ be $G\cup\{\infty\}$ under the same operation as $G$, with
\[
\infty+\infty=\infty, \up{ and } g+\infty=\infty+g=\infty \textup{ for all }g\in G.
\]
Then $M$ is a commutative monoid of order $n$ with exactly two idempotents, and $M$ is not a sandpile monoid.
\end{thm}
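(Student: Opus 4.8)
The plan is to assume $M\cong\SPM{\X}$ for some digraph $\X$ and extract a numerical contradiction from the two ways of factoring $|M|$. The routine preliminaries — that $M$ is a commutative monoid of order $n$ with identity $0$ whose only idempotents are $0$ and $\infty$ — I would dispatch first by noting that $\infty$ is absorbing (hence idempotent) while a nonzero $g\in G$ satisfies $g+g=g$ only if $g=0$. Granting the realization $M\cong\SPM{\X}$, I would then read off two facts. Since $M$ has two idempotents, $X$ must contain a cycle, for otherwise $\M$ would have a single idempotent by part \ref{thmMGAcyclic} of Theorem \ref{BToumpMainThm1}. And the group of units of $M$ is exactly $G$, of order $n-1$, because every element of $G$ is invertible while $\infty$ is not.

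The heart of the argument is to compute $|G|$ a second way as a product of out-degrees and compare it with $|M|=\prod_{v\in V}\outdeg(v)=n$ from \eqref{EqSizeOfM}. By Corollary \ref{ThmCliffPresMaxSubgp} the group of units of $M$ is precisely the maximal subgroup $G_0$ attached to the identity idempotent $0$, so $|G_0|=n-1>1$ (using $n>2$); in particular $\mS(0)\neq\{\textup{sink}\}$, and Theorem \ref{ThmZeroMaxSubgroup} identifies $G_0$ with the sandpile group on $\mS(0)$. Here $\supp(0)=\varnothing$, so $\mS(0)=A(0)\cup\{\textup{sink}\}$, where $A(0)$ is exactly the set of vertices of $V$ from which no cycle of $X$ is accessible. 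Every edge out of a vertex of $A(0)$ lands in $A(0)\cup\{\textup{sink}\}$ (a cycle accessible from the head would be accessible from the tail), so out-degrees are unchanged by passing to the induced subgraph, and $\mS(0)$ is acyclic. Hence by part \ref{thmMGAcyclic} of Theorem \ref{BToumpMainThm1} its sandpile group equals its entire sandpile monoid, giving $n-1=|G_0|=\prod_{v\in A(0)}\outdeg(v)$.

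Finally I would partition $V=A(0)\sqcup R$, where $R$ is the set of vertices from which a cycle is accessible; $R$ is nonempty since $X$ has a cycle. Dividing the two product formulas yields
\[
\prod_{v\in R}\outdeg(v)=\frac{\prod_{v\in V}\outdeg(v)}{\prod_{v\in A(0)}\outdeg(v)}=\frac{n}{n-1}.
\]
The left-hand side is a product of at least one positive integer, hence a positive integer, while $n/(n-1)$ is not an integer for $n>2$ — a contradiction. I expect the main obstacle to be the middle step: correctly identifying $A(0)$ with the vertices having no accessible cycle and verifying that it is closed under taking out-neighbors, so that $|G_0|$ really is the honest product $\prod_{v\in A(0)}\outdeg(v)$ with out-degrees computed in $\X$. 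Once that product formula is pinned down, the non-integrality of $n/(n-1)$ closes the argument at once. It is reassuring that for $n=2$ the ratio equals $2$, an integer, matching the fact that this monoid \emph{is} realizable (by a single vertex carrying a loop and an edge to the sink).
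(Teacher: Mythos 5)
Your proof is correct, but it takes a genuinely different route from the paper's. The paper's proof is two lines: it applies the constraint of Theorem \ref{ThmUPlusAEqU} (via Remark \ref{RemarkMonoidNotSPMTest}) with $u=\infty$, $a=1$, $k=n-2$, since $\infty+1=\infty$ while $1+(n-2)=0$; indeed, exhibiting this family as an application of that new constraint is the point of the section. You instead avoid Theorem \ref{ThmUPlusAEqU} entirely and derive an arithmetic obstruction: identifying the group of units of $\M$ with $G_0$ via Corollary \ref{ThmCliffPresMaxSubgp}, then with the sandpile group of the acyclic subgraph $\mS(0)$ via Theorem \ref{ThmZeroMaxSubgroup}, you get $|G_0|=\prod_{v\in A(0)}\outdeg(v)$, which must divide $|\M|=\prod_{v\in V}\outdeg(v)$ --- properly, since $X$ has a cycle and hence $V\setminus A(0)\neq\varnothing$ --- whereas $n-1$ does not properly divide $n$ for $n>2$. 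Your key verifications (that $A(0)$ is exactly the set of vertices from which no cycle is accessible, and that this set is closed under out-neighbors so out-degrees are preserved in the induced subgraph) are the right points to worry about and they check out. What each approach buys: the paper's is shorter and showcases the accessibility constraint; yours yields a reusable general fact --- in any sandpile monoid the order of the group of units divides the order of the monoid, and divides it properly whenever $X$ contains a cycle --- which is a different, divisibility-based test for non-realizability that the paper does not state.
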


\begin{proof}The idempotents of $M$ are $0$ (the identity) and $\infty$. Let $u=\infty, a=1,k=n-2$. Then $u+a=u$, $a\neq 0$, and $a+k=0$, so by Remark \ref{RemarkMonoidNotSPMTest}, $M$ is not the sandpile monoid of any graph.
\end{proof}

\begin{rmka}In \cite{BToump} it is shown that the lattice of idempotents of a sandpile monoid is distributive. Therefore, any finite commutative monoid whose lattice of idempotents is not distributive is not a sandpile monoid. In particular, any finite non-distributive lattice is a finite (idempotent) commutative monoid that is not a sandpile monoid. The two families of monoids in this section are both families of monoids whose lattices of idempotents {\em are} distributive, and yet the monoids themselves are still not sandpile monoids. 
\end{rmka}

\section{Sink-distance-regular directed graphs}
\label{SecStronglyRegular}


%
%
%
%

In \cite[Section 5]{BiggsDRG} Biggs gives an explicit combinatorial description of the sandpile group identity for any (undirected) distance-regular graph. In this section we study a family of directed graphs that generalizes the family of distance-regular graphs and we give an explicit combinatorial description (Theorem \ref{ThmDistRegularId}) of the sandpile group identity for every member of this family. For distance-regular graphs, our description recovers that of Biggs. Our family also includes many other families of interest, including iterated wheels (Example \ref{ExIteratedWheels}), regular trees (Example \ref{ExRegularTrees}), and regular tournaments (Example \ref{ExRegularTournament}).

First we recall what it means for a graph to be distance-regular.

\begin{defn}Let $Y$ be a finite undirected graph. For vertices $x, y$ of $Y$, the {\em distance} $\delta(x,y)$ from $x$ to $y$ is the length of the shortest path from $x$ to $y$. If $Y$ is connected, the {\em diameter} of $Y$, denoted $\diam(Y)$, is given by $\diam(Y)=\max \{ \delta(x,y) : x, y \in Y\}.$
\end{defn}

Let $Y$ be a finite, connected, undirected graph with no loops and with at most one edge between each pair of vertices. Given two vertices $x, y$ of $Y$ of distance $\delta(x,y) = i$, define $c_i(x,y), a_i(x,y)$, and $b_i(x,y)$ to be the number of vertices $z$ adjacent to $y$ such that $\delta(x,z)$ is $i-1, i$, or $i+1,$ respectively.
(The integer $c_i(x,y)$ counts the vertices $z$ that are adjacent to $y$ but ``closer" to $x$, and $b_i(x,y)$ counts those $z$  adjacent to $y$ that are ``beyond" $y$.) Note that, by definition, for a fixed pair of vertices $x,y$ with $\delta(x,y)=i$, the sum $c_i(x,y) + a_i(x,y) + b_i(x,y)$ is the degree of $y$, and in general the degree of a vertex $x$ is $b_0(x,x)$.

\begin{defn}Let $Y$ be a finite, connected, undirected graph with no loops and with at most one edge between each pair of vertices. $Y$ is said to be {\em distance-regular} if the integers $c_i(x,y), a_i(x,y),$ and $b_i(x,y)$ are constants, independent of the choice of vertices $x,y$ of distance $i$. That is, a graph is distance-regular if, for each integer $i$, for all vertices $x_1,y_1,x_2,y_2$ such that $\delta(x_1,y_1)=\delta(x_2,y_2)=i$, we have $a_i(x_1,y_1)=a_i(x_2,y_2)$, $b_i(x_1,y_1)=b_i(x_2,y_2)$, and $c_i(x_1,y_1)=c_i(x_2,y_2)$.
\end{defn}

Thus, if a graph is distance-regular we can simply refer to the numbers $a_i, b_i$, and $c_i$. A distance-regular graph is certainly regular, as the degree of a vertex $x$ is $b_0(x,x)$, and $b_0(x,x)=b_0(y,y)$ for all vertices $x,y$. Basic material on distance-regular graphs may be found in Chapters 20 and 21 of \cite{Biggs}, the text \cite{BCN}, Sections 1.6 and 4.5 of \cite{Godsil}, and Chapter 21 of \cite{MacWilliams}. It is often fruitful in the study of a distance-regular graph $Y$ to begin by fixing a vertex $s$ and partitioning the vertices of $Y$ based on their distance from $s$, defining $\Gamma_i = \{y \in Y : \delta(y,s)=i\},$ for $0 \le i \le \diam(Y).$ In such a partition, vertices of $\Gamma_i$ can only be adjacent to vertices in $\Gamma_{i-1},\Gamma_i$, or $\Gamma_{i+1}$.

We now consider the following generalization to directed graphs. We remind the reader that the digraph $\X$ may have loops and multiple edges. 

\begin{defn}For vertices $x,y\in\X$, the {\em distance} $\delta(x,y)$ from $x$ to $y$ is the length of the shortest directed path (of length 0 or greater) from $x$ to $y$ in $\X$, if such a path exists, and is $\infty$ otherwise. If $i$ is a nonnegative integer, the {\em $i$th neighborhood} of $y$, denoted $\Gamma_i(y)$, is
\[
\Gamma_i(y)=\{x\in \X: \delta(x,y)=i\}.
\]
\end{defn}
Denote the sink of $\X$ by $s$, and denote $\Gamma_i(s)$ by $\Gamma_i$. Since there is a path from every vertex to the sink and $\X$ is finite, $d=\max_{v\in\X}\{\delta(v,s)\}$ is an integer, and $\{\Gamma_0,\Gamma_1,\ldots,\Gamma_d\}$ forms a partition of the vertices of $\X$, with $\Gamma_0=\{s\}$. If $(v,w)$ is a directed edge of $\X$ from $v$ to $w$, we call $v$ and $w$ the {\em tail} and the {\em head}, respectively, of the edge $(v,w)$. 

\begin{defn}
\label{DefLocallyDistanceRegular} We call $\X$ {\em sink-distance-regular} if the sink of $\X$ has outdegree 0 and the neighborhoods $\{\Gamma_0,\Gamma_1,\ldots,\Gamma_d\}$ of the sink satisfy the following properties:
\begin{enumerate}
	\item if $v\in\Gamma_i$ and $(v,w)$ is a directed edge of $\X$, then $w\in\Gamma_{i-1}$, $w\in\Gamma_i$, or $w\in\Gamma_{i+1}$ (where $\Gamma_{d+1}$ is interpreted as $\{\}$), and
	\item there exist collections of nonnegative integers $\{a_i\}_{i=1}^d$ and positive integers $\{b_i\}_{i=1}^{d-1}$ and $\{c_i\}_{i=1}^d$, such that
	\begin{enumerate}
	\item if $v\in \Gamma_1$, then $v$ is the tail of precisely $c_1$ edges whose heads are the sink,
	\item if $v\in \Gamma_i$ where $i>1$, then $v$ is the tail of precisely $c_i$ edges whose heads are in $\Gamma_{i-1}$ and $v$ is the head of precisely $c_i$ edges whose tails are in $\Gamma_{i-1}$, and
	\item if $v\in \Gamma_i$ where $i\geq 1$, then $v$ is the tail of precisely $a_i$ edges whose heads are in $\Gamma_i$, $v$ is the head of precisely $a_i$ edges whose tails are in $\Gamma_i$, $v$ is the tail  of precisely $b_i$ edges whose heads are in $\Gamma_{i+1}$, and $v$ is the head of precisely $b_i$ edges whose tails are in $\Gamma_{i+1}$ (where $b_d$ is interpreted as $0$).
\end{enumerate}
\end{enumerate}
\end{defn}

\begin{rmk}
When $\X$ is sink-distance-regular, the neighborhoods of the sink form a directed version of the concept of ``equitable partition" appearing in pages 195--198 of \cite{Godsil}. If $Y$ is a distance-regular graph, 
then after choosing a vertex of $Y$ to be the sink and removing the outward edges from the sink we obtain a graph that is sink-distance-regular. However, the converse is not true even for undirected graphs---every distance-regular graph is regular, whereas there exist sink-distance-regular graphs (where every edge not including the sink is undirected) which are not regular---see, for example, the graphs in Examples \ref{ExIteratedWheels} and \ref{ExRegularTrees}.
\end{rmk}

Sink-distance-regular graphs are far more common than distance-regular graphs, and it is simple to construct examples of them one neighborhood at a time, beginning with $\Gamma_0$. For a small example of a sink-distance-regular graph that consists mostly of directed edges, see Figure \ref{FigDirectedDRG} (where each undirected edge is to be interpreted as a pair of directed edges, one in each direction). This graph is sink-distance-regular, with $d=2$, $\Gamma_1=\{v_1,v_2\}$, $\Gamma_2=\{v_3,v_4,v_5,v_6\}$, $c_1=2$, $a_1=1$, $b_1=2$, $c_2=1$, and $a_2=1$.

\begin{figure}[htb]
\begin{center}
\begin{tikzpicture}[>=stealth']
\tikzset{VertexStyle} = [shape = circle]
\SetUpVertex[Lpos=-90]
\SetUpEdge[lw= 1.2pt]
\SetGraphUnit{1.3} 
\Vertex[Math,L={\up{sink}},Lpos=-180]{v0}
\NOEA[Math,L=v_1](v0){v1}
\SOEA[Math,L=v_2,Lpos=90](v0){v2}
\NOEA[Math,L=v_3,Lpos=90](v1){v3}
\EA[Math,L=v_4,Lpos=-135](v1){v4}
\EA[Math,L=v_5,Lpos=135](v2){v5}
\SOEA[Math,L=v_6](v2){v6}
\tikzset{EdgeStyle/.style={->}}
\Edge(v1)(v0)
\Edge[style={bend right}](v1)(v0)
\Edge(v2)(v0)
\Edge[style={bend left}](v2)(v0)
\Edge(v1)(v5)
\Edge(v2)(v4)
\Edge(v3)(v4)
\Edge(v4)(v5)
\Edge(v5)(v6)
\Edge[style={bend right}](v6)(v3)
\Loop[dist=1cm,dir=NOWE,style={very thick,->}](v1)
\Loop[dist=1cm,dir=SOWE,style={very thick,->}](v2)
\Edge(v4)(v1)
\Edge(v5)(v2)
\tikzset{EdgeStyle/.style={-}}
\Edge(v1)(v3)
\Edge(v2)(v6)
\end{tikzpicture}
\caption{A small sink-distance-regular graph $\X$.}
\label{FigDirectedDRG}
\end{center}
\end{figure}
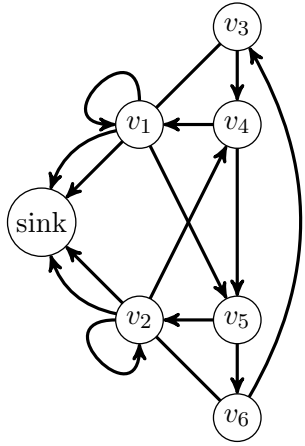

Now, let $\X$ be a sink-distance-regular graph with $d\geq 1$. 
%
For $i\geq 1$, let $\gamma_i$ denote the configuration on $\X$, stable or not, with one grain of sand on each vertex of $\Gamma_i$ and none elsewhere. Note that if $v\in\Gamma_i$ and $i\geq 1$, then $\outdeg(v)=a_i+b_i+c_i$---thus every vertex of $\Gamma_i$ has the same outdegree. We write $\outdeg(\Gamma_i)$ for this common outdegree. Also, if $m$ is a sandpile configuration on $\X$ and $j$ is a nonnegative integer, we denote the state (stable or not)
\[
\underbrace{m+m+\cdots+m}_j
\]
by $jm$. 

If $x$ is an unstable configuration on $\X$ in which each vertex of $\Gamma_i$ holds at least $\outdeg(\Gamma_i)$ grains of sand, let $T_i(x)$ denote the configuration (stable or not) created by toppling each vertex of $\Gamma_i$ exactly once. Then it follows from the definitions of the parameters $a_i$, $b_i$, and $c_i$ that if $d=1$, then
\[
T_1(x)=T_d(x)=x-c_1\gamma_1,
\]
and if $d>1$, then
\begin{align*}
T_1(x)&=x-(c_1+b_1)\gamma_1 + c_2\gamma_2,\\
T_d(x)&=x-c_d\gamma_d+b_{d-1}\gamma_{d-1}, \up{ and}\\
T_i(x)&=x-(c_i+b_i)\gamma_i + b_{i-1}\gamma_{i-1}+c_{i+1}\gamma_{i+1}
\end{align*}
for $1<i<d$.

Let $U_i$ denote the following composition of these topplings:
$$U_i = T_dT_{d-1} \cdots T_{i+1}T_i,$$
where $U_d=T_d$. We will use the following lemma regarding the $U_i$ to prove that our description of the sandpile group identity of $\X$ in Theorem \ref{ThmDistRegularId} is correct.
\begin{lem}
\label{LemUiFormulas}
Let $1\leq i\leq d$. Suppose $x$ is an unstable sandpile configuration in which
each vertex of $\Gamma_i$ is ready to topple (that is, each vertex of $\Gamma_i$ holds at least $\outdeg(\Gamma_i)$ grains of sand) and, for all $j$ with $i< j \leq d$, each vertex of $\Gamma_{j}$ holds at least $\outdeg(\Gamma_j)-c_{j}$ grains of sand. Then $U_i$ is a legal sequence of topplings to apply to $x$ and, for $1 < i \leq d$ we have
$$U_{i}(x) = x +b_{i-1}\gamma_{i-1} - c_{i}\gamma_{i},$$
and for $i=1$ we have
$$U_1(x) = x - c_1\gamma_1.$$
\end{lem}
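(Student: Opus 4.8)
The plan is to establish both assertions of the lemma—that $U_i$ is a legal toppling sequence and that it produces the stated net change—simultaneously, by downward induction on $i$, exploiting the recursive structure $U_i = U_{i+1}\circ T_i$ (with $U_d = T_d$). The observation that drives the whole argument is that toppling every vertex of $\Gamma_j$ exactly once alters a configuration by a \emph{fixed} vector supported on $\Gamma_{j-1}$, $\Gamma_j$, $\Gamma_{j+1}$, independent of the configuration it is applied to; and the buffer hypothesis ``$\Gamma_j$ holds at least $\outdeg(\Gamma_j)-c_j$ grains'' for $j>i$ is calibrated precisely so that, as we topple upward from $\Gamma_i$ to $\Gamma_d$, each successive level is raised to its firing threshold exactly when its turn arrives.

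For the base case $i=d$, legality is immediate since every vertex of $\Gamma_d$ is assumed ready, and the claimed value is just the given expression for $T_d(x)$ (the $d=1$ form $T_1(x)=x-c_1\gamma_1$ when $d=1$, and $T_d(x)=x-c_d\gamma_d+b_{d-1}\gamma_{d-1}$ when $d>1$). For the inductive step I would first apply $T_i$, which is legal because $\Gamma_i$ is ready by hypothesis, and then verify that $T_i(x)$ satisfies the hypotheses of the lemma at index $i+1$. This is the heart of the legality claim: since $T_i$ adds exactly $c_{i+1}$ grains to each vertex of $\Gamma_{i+1}$ and leaves every $\Gamma_j$ with $j>i+1$ untouched, each vertex of $\Gamma_{i+1}$ passes from holding at least $\outdeg(\Gamma_{i+1})-c_{i+1}$ grains to holding at least $\outdeg(\Gamma_{i+1})$ grains—hence ready—while the higher buffers are preserved. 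The inductive hypothesis then guarantees that $U_{i+1}$ is legal on $T_i(x)$ and supplies the value $U_{i+1}(T_i(x)) = T_i(x) + b_i\gamma_i - c_{i+1}\gamma_{i+1}$.

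It then remains to substitute the formula for $T_i(x)$ and watch the cross terms cancel. For $1<i<d$ one gets $x-(c_i+b_i)\gamma_i+b_{i-1}\gamma_{i-1}+c_{i+1}\gamma_{i+1}+b_i\gamma_i-c_{i+1}\gamma_{i+1}=x+b_{i-1}\gamma_{i-1}-c_i\gamma_i$, since the $\gamma_{i+1}$ terms and the $b_i\gamma_i$ terms annihilate. The boundary index $i=1$ is the one case needing separate care: here the toppling of $\Gamma_1$ sends $c_1$ grains into the sink rather than into a level $\Gamma_0$, so the term that would read $b_0\gamma_0$ is simply absent, and the same substitution yields $U_1(x)=x-c_1\gamma_1$ with no $\gamma_0$ contribution. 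This is exactly why the $i=1$ formula lacks the $b_{i-1}\gamma_{i-1}$ term present for $i>1$.

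I expect the main obstacle to be the careful bookkeeping underlying the legality claim—confirming that along the sequence $T_i,T_{i+1},\dots,T_d$ each level $\Gamma_{j+1}$ is promoted to threshold precisely by the immediately preceding toppling $T_j$ (among $T_i,\dots,T_j$ only $T_j$ borders $\Gamma_{j+1}$), that $\Gamma_{j+1}$ loses nothing before its own firing $T_{j+1}$, and that it receives nothing from $\Gamma_{j+2}$ until later—together with the boundary conventions $b_d=0$ and the absence of the levels $\Gamma_{d+1}$ and $\Gamma_0$. An equivalent and perhaps more transparent route, which I would keep in reserve, is to write $U_i(x)=x+\sum_{j=i}^d \Delta_j$ with $\Delta_j$ the fixed change vector of $T_j$, and to read off the coefficient of each $\gamma_k$ directly: every coefficient telescopes to $0$ except at $k=i$ (contributing $-c_i$) and at $k=i-1$ (contributing $+b_{i-1}$), recovering both formulas at once.
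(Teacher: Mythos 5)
Your proof is correct and follows essentially the same route as the paper's: both rest on the observation that each $T_j$ changes the configuration by a fixed vector and promotes $\Gamma_{j+1}$ exactly to its toppling threshold (using the buffer hypothesis), and both obtain the formula by telescoping cancellation of these change vectors. The only difference is organizational---you run a single downward induction on $i$ via $U_i = U_{i+1}\circ T_i$, whereas the paper argues legality forward along $T_i,\dots,T_d$, inducts on the partial compositions $T_j\cdots T_i$, and treats $i=1$ and small $d$ as separate cases---but the mathematical content is the same.
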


\begin{proof}Certainly $T_i$ may be applied to $x$. If $i<d$, this moves $c_{i+1}$ grains of sand onto each vertex of $\Gamma_{i+1}$, so we may apply $T_{i+1}$ to $T_i(x)$. In general, if $i<j<d$, none of $T_{i}, T_{i+1},\ldots T_{j-1}$ affect the number of grains of sand on the vertices of $\Gamma_{j+1}$, and since $T_j$ moves $c_{j+1}$ grains of sand onto each vertex of $\Gamma_{j+1}$, $T_{j+1}$ may be applied to $T_jT_{j-1}\cdots T_i(x)$. This justifies the legalities of the topplings used in $U_i(x)$.

Next we verify the formulas for the $U_i(x)$. The result is immediate for $d=1$ and a simple calculation verifies $d=2$, so for the remainder of the proof assume $d>2.$ 

First, $T_d=U_d$ and the formula for $T_d(x)$ agrees with the one we claimed for $U_d(x)$. Next, for $i=d-1$ we have
\[
U_{d-1}(x)=T_{d}(T_{d-1}(x)) = x+ b_{d-2}{\gamma_{d-2}} -c_{d-1}{\gamma_{d-1}},
\]
as claimed. Next, if $1<i<d-1$, then a straightforward induction shows that
\[
T_jT_{j-1}\cdots T_{i+1}T_i(x) = x + b_{i-1}\gamma_{i-1} - c_i\gamma_i - b_j\gamma_j + c_{j+1}\gamma_{j+1}
\]
for all $i<j<d$. In particular,
\[
T_{d-1}T_{d-2}\cdots T_{i+1}T_i(x) = x + b_{i-1}\gamma_{i-1} - c_i\gamma_i - b_{d-1}\gamma_{d-1} + c_{d}\gamma_d.
\]
Applying $T_d$ to this we obtain
\begin{align*}
U_i(x) &= 
x + b_{i-1}\gamma_{i-1} - c_i\gamma_i - b_{d-1}\gamma_{d-1} + c_{d}\gamma_d + b_{d-1}{\gamma_{d-1}} - c_d{\gamma_{d}} \\
&= x + b_{i-1}\gamma_{i-1} - c_i\gamma_i,
\end{align*}
as claimed. Finally, for $i=1$ we have
\begin{align*}
U_1(x) = U_2(T_1(x)) &= U_2(x-(c_1+b_1){\gamma_{1}} +c_{2}{\gamma_{2}})\\
&= x-  (c_1+b_1){\gamma_{1}} +c_{2}{\gamma_{2}} + b_1\gamma_1 - c_2 \gamma_2\\
&= x - c_1\gamma_1. \qedhere
\end{align*}
 

\end{proof}

Recall that $c_i>0$ for all $1\leq i \leq d.$ Now, we use the floor function to define the parameter
\[
n_d = \left\lfloor\frac{\outdeg(\Gamma_d)-1}{c_d} \right\rfloor.
\]
Note that $\outdeg(\Gamma_d)\geq 1,$ so $n_d\geq 0$. Thus $0\le n_dc_d \le \outdeg(\Gamma_d)-1$ but $n_dc_d +c_d > \outdeg(\Gamma_d)-1.$ In other words, the configuration $(n_dc_d) \gamma_d$ is stable but adding another $c_d$ grains of sand to each vertex of $\Gamma_d$ makes it unstable.

Define, furthermore, for $1 \le i<d$,
\[
n_i = \left\lfloor\frac{\outdeg(\Gamma_i)-1+n_{i+1}b_i}{c_i}\right\rfloor.
\]
Thus $n_i\geq 0$, $n_ic_i-n_{i+1}b_i \leq \outdeg(\Gamma_i) -1$, and $n_ic_i-n_{i+1}b_i+c_i > \outdeg(\Gamma_i) -1.$ Also let $n_{d+1}=0$. 

\begin{thm}
\label{ThmDistRegularId}
Let $\X$ be a sink-distance-regular directed graph, with $d,\{a_i\}_{i=1}^d,\{b_i\}_{i=1}^d,\{c_i\}_{i=1}^d$ as in Definition \ref{DefLocallyDistanceRegular}, and $\{n_i\}_{i=1}^{d+1}$ as above. 
Then the identity element of $\G$ is the element $e\in \M$ given by
$$e= \sum_{i=1}^{d} (n_ic_i -n_{i+1}b_i) \gamma_i.$$
\end{thm}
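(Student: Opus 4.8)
The plan is to show that the claimed configuration $e$ is idempotent and that its support meets every cyclic strong component of $\X$; by the bijection of Theorem \ref{ThmBijection} this forces $e$ to be the unique idempotent corresponding to the full filter, which by part \ref{CorGInGOneGrainEveryCycle} of Theorem \ref{BToumpMainThm1} is exactly the identity of $\G$. Before doing either I would record that $e$ is a genuine stable configuration. The inequality $n_ic_i - n_{i+1}b_i \le \outdeg(\Gamma_i)-1$ built into the definition of $n_i$ gives the upper bound on each coefficient, while the floor estimate $n_ic_i > \outdeg(\Gamma_i)-1-c_i+n_{i+1}b_i$ together with $\outdeg(\Gamma_i)=a_i+b_i+c_i$ yields $n_ic_i - n_{i+1}b_i > a_i+b_i-1 \ge -1$, so each coefficient lies in $[0,\outdeg(\Gamma_i)-1]$.

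The heart of the argument is idempotence, i.e.\ that stabilizing $2e=e+e$ returns $e$, and here I would use the compound topplings $U_i$ and Lemma \ref{LemUiFormulas}. Concretely, I would topple $2e$ in $d$ phases: apply $U_d$ exactly $n_d$ times, then $U_{d-1}$ exactly $n_{d-1}$ times, and so on down to $U_1$ exactly $n_1$ times. The bookkeeping is arranged so that at the start of phase $i$ the neighborhoods $\Gamma_{i+1},\dots,\Gamma_d$ already sit at their final values $n_jc_j-n_{j+1}b_j$, which satisfy $n_jc_j-n_{j+1}b_j \ge \outdeg(\Gamma_j)-c_j$, exactly the hypothesis that Lemma \ref{LemUiFormulas} requires of the neighborhoods above $\Gamma_i$; meanwhile $\Gamma_i$ holds $2n_ic_i-n_{i+1}b_i$ grains. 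The crucial point is that the floor definition of $n_i$ guarantees $n_ic_i-n_{i+1}b_i+c_i \ge \outdeg(\Gamma_i)$, which is precisely the readiness needed at the tightest (last) of the $n_i$ applications of $U_i$, so every $U_i$ I invoke is legal. Each $U_i$ applied $n_i$ times lowers $\Gamma_i$ by $n_ic_i$ (the grains going into $\Gamma_{i-1}$, or into the sink when $i=1$) and, for $i>1$, raises $\Gamma_{i-1}$ by $n_ib_{i-1}$ while leaving the higher neighborhoods fixed. Tracking these net changes shows the coefficient of $\gamma_i$ ends at $2(n_ic_i-n_{i+1}b_i)-n_ic_i+n_{i+1}b_i = n_ic_i-n_{i+1}b_i$. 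Thus the process terminates at the stable configuration $e$, and since stabilization is order-independent and unique, $e\oplus e=e$.

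Next I would show $\supp(e)$ meets every cyclic strong component. Because $b_i\ge 1$ for $i<d$, the coefficient $n_ic_i-n_{i+1}b_i\ge a_i+b_i\ge 1$ is positive for every $i<d$, so $\Gamma_1\cup\cdots\cup\Gamma_{d-1}\subseteq\supp(e)$; and $n_dc_d\ge a_d$, so $\Gamma_d\subseteq\supp(e)$ whenever $a_d>0$. Now let $C$ be a cyclic strong component, choose a cycle in $C$, and let $v$ be a vertex of that cycle lying in the lowest-indexed neighborhood $\Gamma_p$ the cycle meets. If $p<d$ then $v\in\Gamma_p\subseteq\supp(e)$; if $p=d$ then the entire cycle lies in $\Gamma_d$ and uses only edges internal to $\Gamma_d$, forcing $a_d\ge 1$ and hence $\Gamma_d\subseteq\supp(e)$. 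Either way $C\cap\supp(e)\ne\varnothing$, so $C\subseteq\cl(\supp(e))\subseteq\mi(\supp(e))$ and $C$ lies in the filter $F(e)$. Hence $F(e)$ is the full filter, which is the filter of the identity of $\G$, and the bijection of Theorem \ref{ThmBijection} completes the identification $e=$ identity of $\G$.

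The step I expect to be most delicate is the legality bookkeeping inside the idempotence argument: verifying phase by phase that every vertex of $\Gamma_i$ is genuinely ready at each of the $n_i$ applications of $U_i$, and that the higher neighborhoods remain at the level demanded by Lemma \ref{LemUiFormulas}. This is where the exact floor definition of the $n_i$ is used in an essential way, and the boundary neighborhoods $\Gamma_1$ (which drains to the sink, so $U_1(x)=x-c_1\gamma_1$) and $\Gamma_d$ (where $b_d=0$ and possibly $n_d=0$) must be checked separately.
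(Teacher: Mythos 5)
Your proposal is correct and follows essentially the same route as the paper's proof: verify $e\in\M$ from the floor inequalities, establish idempotence by stabilizing $e+e$ with the phased topplings $U_d^{n_d},\ldots,U_1^{n_1}$ via Lemma \ref{LemUiFormulas}, and show $\supp(e)$ meets every cycle (handling a cycle contained in $\Gamma_d$ via $a_d\geq 1$) to identify $e$ with the identity of $\G$ through the idempotent--filter correspondence. The only differences are presentational (phase-by-phase bookkeeping versus the paper's explicit induction formula for $U_j^{n_j}\cdots U_d^{n_d}(e+e)$, and citing Theorem \ref{ThmBijection} directly rather than Remark \ref{RmkNoInitialAbnormalForIdem}).
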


\begin{proof}

First we show that $e$ really is an element of $\M$. For all $1\leq i\leq d$, we have 
\[
n_ic_i-n_{i+1}b_i+c_i > \outdeg(\Gamma_i) -1,
\]
so
\[
n_ic_i-n_{i+1}b_i \geq \outdeg(\Gamma_i) - c_i,
\]
and since $c_i\leq \outdeg(\Gamma_i)$,
\[
n_ic_i-n_{i+1}b_i \geq 0,
\]
so $e$ has a nonnegative number of grains of sand on each vertex. Also for all $1\leq i\leq d$,
$$
n_ic_i-n_{i+1}b_i \leq \outdeg(\Gamma_i) -1,
$$
so $e$ is stable. Thus $e\in \M$.

Now, to show that $e$ is the identity of $\G$, by Remark \ref{RmkNoInitialAbnormalForIdem} it suffices to show that $e$ is idempotent and that $e$ has a grain of sand on every cycle of $\X$. 
First, we show $e$ is idempotent. 

By the definition of the parameters $n_i$, $e$ is only barely stable, in the sense that if, for any $1\leq i\leq d$, $c_i$ additional grains of sand are added to each vertex of $\Gamma_i$, the resulting configuration is unstable and we may apply $U_i.$ Lemma \ref{LemUiFormulas} then gives 
\begin{equation*}
U_i(e + m + c_i \gamma_i) = e+m+b_{i-1}\gamma_{i-1}
\end{equation*}
for any $m\in\M$ and any $i$ with $1 < i \le d.$ 
For $i=1$ we have
\[
U_1(e + m+ c_1\gamma_1) = e+m
\]
for any $m\in \M$. If $k$ is a nonnegative integer, we denote the composition of the toppling operator $U_i$ with itself $k$ times by $U_i^k$. We compute $e \oplus e$ by toppling the configuration $e+e$ as follows.

If $d=1$, then $e=n_1c_1\gamma_1$, so $e+e=2n_1c_1\gamma_1$, so $U_1$ may be applied $n_1$ times to $e+e$. Thus $U_1^{n_1}(e+e)=U_1^{n_1}(e+n_1c_1\gamma_1)=e$, so $e$ is idempotent.

If $d=2$, a similar argument shows that $U_2$ may be applied to $e+e$ $n_2$ times, and $U_1$ may then be applied $n_1$ times to the resulting state, and that $U_1^{n_1}(U_2^{n_2}(e+e)) = e$, so $e$ is idempotent.

Let $d>2$. Then a straightforward induction shows that, if $1<j\leq d$, then $U_j^{n_j}\cdots U_d^{n_d}$ is a legal sequence of topplings to apply to $e+e$, and 
$$
U_j^{n_j}\cdots U_d^{n_d} (e+e) = e + n_{j-1}c_{j-1}\gamma_{j-1} + \sum_{i=1}^{j-2}(n_ic_i-n_{i+1}b_i)\gamma_i.
$$
In particular, for $j=2$, we have
\[
U_2^{n_2}\cdots U_d^{n_d} (e+e) = e + n_{1}c_{1}\gamma_{1},
\]
so $U_1^{n_1}U_2^{n_2}\cdots U_d^{n_d} (e+e) = U_1^{n_1} (e + n_{1}c_{1}\gamma_{1})=e$. Thus $e$ is idempotent.

%
%

Finally, we show that $e$ has a grain of sand on every cycle of $X$. First, if $1\leq i<d$, then
$n_ic_i-n_{i+1}b_i+c_i \geq \outdeg(\Gamma_i),$ so
\begin{align*}
n_ic_i-n_{i+1}b_i &\geq \outdeg(\Gamma_i) -c_i \\
&=(a_i+b_i+c_i)-c_i\\
&=a_i+b_i\\
&\geq 1
\end{align*}
because $b_i\geq 1$ and $a_i\geq 0$. Thus every vertex of $\Gamma_i$ holds at least one grain of sand in $e$, for $1\leq i<d$. 

Suppose now for the sake of contradiction that there exists a cycle $C$ of $X$ such that no vertex of $C$ holds a grain of sand in $e$. Then every vertex of $C$ must lie in $\Gamma_d$, so $\Gamma_d$ contains a cycle, and so $a_d\geq 1$. But we also have $n_dc_d + c_d \geq \outdeg(\Gamma_d)$, so
\begin{align*}
n_dc_d &\geq \outdeg(\Gamma_d) - c_d \\
&=(a_d+c_d) - c_d \\
&=a_d\\
&\geq 1.
\end{align*}
Thus every vertex of $\Gamma_d$ holds at least one grain of sand on in $e$, contradicting our assumption.

Thus $e$ has a grain of sand on every cycle of $X$.
\end{proof}

\begin{eg}Let $Y$ be a distance-regular graph of diameter $d$. Then $Y$ is regular, say of degree $k$. Form the graph $\X$ by choosing any vertex of $Y$ to be the sink and deleting all outward edges from the sink. Then $\X$ is sink-distance-regular, where the constants $\{a_i\}_{i=1}^d,\{b_i\}_{i=1}^d,\{c_i\}_{i=1}^d$ agree with those that arise from the distance-regularity of $Y$. The identity $e$ of $\G$ is then 
$$e= \sum_{i=1}^{d} (n_ic_i -n_{i+1}b_i) \gamma_i,$$
where $n_{d+1}=0$,
\[
n_d = \left\lfloor\frac{k-1}{c_d} \right\rfloor,
\]
and, for $1 \le i<d$,
\[
n_i = \left\lfloor\frac{k-1+n_{i+1}b_i}{c_i}\right\rfloor.
\]
This agrees with Biggs's description of the identity of $\G$ \cite{BiggsDRG}.
\end{eg}

We now give some examples to show how Theorem \ref{ThmDistRegularId} generalizes beyond distance-regular graphs.

\begin{eg}\label{ExIteratedWheels}

Let $n$ and $d$ be integers with $n\geq 2$ and $d\geq 1$. Let $Y$ be the undirected prism graph $Y_{n,d}=C_n\times P_d$ (where $C_n$ is the undirected $n$-cycle and $P_d$ is the path graph on $d$ vertices), with the $C_n$'s of $Y$ arranged in the plane in a concentric fashion. 
Form the graph $\X$ by adding a central vertex (the sink) and adding an edge from each of the innermost vertices of $Y$ to the sink. We call $\X$ the $d$-iterated $n$-wheel; when $d=1$ $\X$ is the $n$-wheel, whose sandpile group was considered as an example in \cite{DualGraphs}. Figure \ref{FigForExIteratedWheels} depicts the $3$-iterated $5$-wheel (where each undirected edge is to be interpreted as a pair of directed edges, one in each direction).

\begin{figure}[htb]
\begin{center}
\begin{tikzpicture}[>=stealth']
\SetUpVertex[Lpos=-90]
\SetUpEdge[lw= 1.2pt]
\SetGraphUnit{2} 
\tikzset{VertexStyle} = [shape = circle]
\Vertex[Math,L={\up{sink}}]{v0}

\Vertex[Math,L={v_{1}},x=1.42658477444273,y=0.463525491562421]{v1}
\Vertex[Math,L={v_{2}},x=0.000000000000000,y=1.50000000000000]{v2}
\Vertex[Math,L={v_{3}},x=-1.42658477444273,y=0.463525491562421]{v3}
\Vertex[Math,L={v_{4}},x=-0.881677878438710,y=-1.21352549156242]{v4}
\Vertex[Math,L={v_{5}},x=0.881677878438709,y=-1.21352549156242]{v5}
\Vertex[Math,L={v_{6}},x=2.56785259399692,y=0.834345884812358]{v6}
\Vertex[Math,L={v_{7}},x=0.000000000000000,y=2.70000000000000]{v7}
\Vertex[Math,L={v_{8}},x=-2.56785259399691,y=0.834345884812358]{v8}
\Vertex[Math,L={v_{9}},x=-1.58702018118968,y=-2.18434588481236]{v9}
\Vertex[Math,L={v_{10}},x=1.58702018118968,y=-2.18434588481236]{v10}
\Vertex[Math,L={v_{11}},x=3.70912041355110,y=1.20516627806230]{v11}
\Vertex[Math,L={v_{12}},x=0.000000000000000,y=3.90000000000000]{v12}
\Vertex[Math,L={v_{13}},x=-3.70912041355110,y=1.20516627806230]{v13}
\Vertex[Math,L={v_{14}},x=-2.29236248394065,y=-3.15516627806230]{v14}
\Vertex[Math,L={v_{15}},x=2.29236248394065,y=-3.15516627806230]{v15}

\tikzset{EdgeStyle/.style={->}}
\Edge(v1)(v0)
\Edge(v2)(v0)
\Edge(v3)(v0)
\Edge(v4)(v0)
\Edge(v5)(v0)
\tikzset{EdgeStyle/.style={-}}
\Edge(v1)(v2)
\Edge(v1)(v5)
\Edge(v1)(v6)
\Edge(v2)(v3)
\Edge(v2)(v7)
\Edge(v3)(v4)
\Edge(v3)(v8)
\Edge(v4)(v5)
\Edge(v4)(v9)
\Edge(v5)(v10)
\Edge(v6)(v7)
\Edge(v6)(v10)
\Edge(v6)(v11)
\Edge(v7)(v8)
\Edge(v7)(v12)
\Edge(v8)(v9)
\Edge(v8)(v13)
\Edge(v9)(v10)
\Edge(v9)(v14)
\Edge(v10)(v15)
\Edge(v11)(v12)
\Edge(v11)(v15)
\Edge(v12)(v13)
\Edge(v13)(v14)
\Edge(v14)(v15)

\end{tikzpicture}
\caption{The $3$-iterated $5$-wheel $\X$; $\Gamma_1=\{v_1,\ldots,v_5\}$, $\Gamma_2=\{v_6,\ldots,v_{10}\}$, $\Gamma_3=\{v_{11},\ldots,v_{15}\}$.}
\label{FigForExIteratedWheels}
\end{center}
\end{figure}
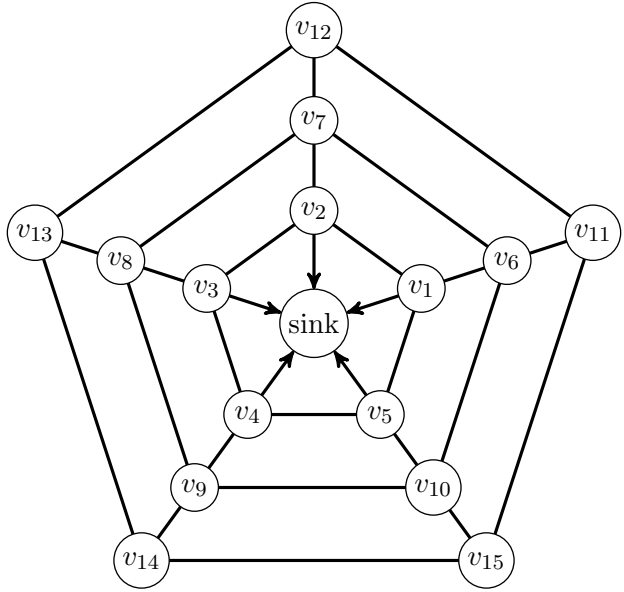

$\X$ is sink-distance-regular. For $1\leq i\leq d$, $\Gamma_i$ consists of the vertices of the $i$th cycle of $\X$ (counted starting from the sink and moving outward), $c_i=1$, and $a_i=2$. We also have $b_i=1$ for all $1\leq i<d$. A straightforward induction shows that $n_i=2+3(d-i)$ for $1\leq i\leq d$, so $n_dc_d=2$ and $n_ic_i -n_{i+1}b_i = 3$ for $1\leq i<d$. The identity of $\G$ is therefore
\begin{align*}
e&= \sum_{i=1}^{d} (n_ic_i -n_{i+1}b_i) \gamma_i \\
 &= 2\gamma_d+\sum_{i=1}^{d-1} 3\gamma_i \\
 &= \MAX.
\end{align*}
\end{eg}

\begin{eg}\label{ExRegularTrees} Let $n$ and $d$ be integers with $n\geq 3$ and $d\geq 2$. Let $Y$ be the $n$-regular (undirected) tree of depth $d$---that is, let $Y$ be a finite rooted tree where each non-leaf vertex has degree $n$ (so the root has $n$ children and each non-root, non-leaf vertex has $n-1$ children) and where the path from any leaf to the root has $d-1$ edges. Form the graph $\X$ by adding a new vertex (the sink) and adding $n-1$ directed edges from each leaf of $Y$ to the sink. The sandpile group of $\X$ was studied in \cite{ToumpakariTrees}. A variant was studied in \cite{LevineTree}. 
%

$\X$ is sink-distance-regular. For $1\leq i\leq d$, $\Gamma_i$ consists of the vertices of distance $d-i$ from the root and $a_i=0$. We also have $c_i=n-1$ and $b_i=1$ for $1\leq i<d$, and $c_d=n$. A straightforward induction shows that $n_d=0$ and $n_i=1$ for $1\leq i<d$, so identity of $\G$ is 
\begin{align*}
e&= \sum_{i=1}^{d} (n_ic_i -n_{i+1}b_i) \gamma_i \\
 &= 0\gamma_d+ (n-1)\gamma_1 + \sum_{i=1}^{d-2} (n-2)\gamma_i,
\end{align*}
the configuration of $\M$ that holds no grains on the root, $n-1$ grains on each child of the root, and $n-2$ grains on every other non-sink vertex of $\X$.
\end{eg}

\begin{eg}\label{ExRegularTournament} Let $k$ be a positive integer and let $Y$ be a regular tournament on $2k+1$ vertices---that is, let $Y$ be a directed graph with $2k+1$ vertices where there is exactly one directed edge between every pair of vertices and where each vertex has in-degree $k$ and out-degree $k$. Let $r$ be a positive integer and form the graph $\X$ by adding a vertex (the sink) and adding $r$ directed edges from each vertex of $Y$ to the sink. The graph $\X$ obtained from a regular tournament on 5 vertices (with $r=1$) is depicted in Figure \ref{FigForRegularTournaments}.

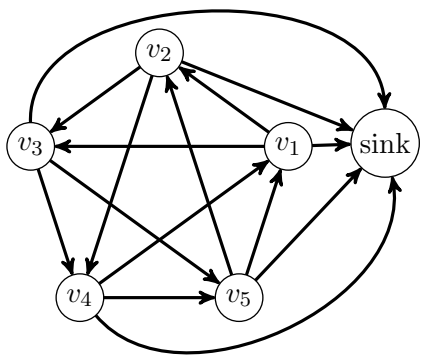
\begin{figure}[htb]
\begin{center}
\begin{tikzpicture}[>=stealth']
\SetUpVertex[Lpos=-90]
\SetUpEdge[lw= 1.2pt]
\SetGraphUnit{1.3} 
\tikzset{VertexStyle} = [shape = circle]
\Vertex[Math,L={v_{1}},x=1.71190172933128,y=0.556230589874906]{v1}
\Vertex[Math,L={v_{2}},x=0.000000000000000,y=1.80000000000000]{v2}
\Vertex[Math,L={v_{3}},x=-1.71190172933128,y=0.556230589874906]{v3}
\Vertex[Math,L={v_{4}},x=-1.05801345412645,y=-1.45623058987491]{v4}
\Vertex[Math,L={v_{5}},x=1.05801345412645,y=-1.45623058987491]{v5}
\Vertex[Math,L={\up{sink}},x=3,y=.6]{v0}
\tikzset{EdgeStyle/.style={->}}
\Edge(v1)(v2)
\Edge(v2)(v3)
\Edge(v3)(v4)
\Edge(v4)(v5)
\Edge(v5)(v1)
\Edge(v1)(v3)
\Edge(v2)(v4)
\Edge(v3)(v5)
\Edge(v4)(v1)
\Edge(v5)(v2)
\Edge(v1)(v0)
\Edge(v2)(v0)
\Edge[style={bend left=90}](v3)(v0)
\Edge[style={bend right=75}](v4)(v0)
\Edge(v5)(v0)
\end{tikzpicture}
\caption{The graph $\X$ obtained from a regular tournament on 5 vertices ($r=1$).}
\label{FigForRegularTournaments}
\end{center}
\end{figure}

$\X$ is sink-distance regular, with $d=1$. $\Gamma_1$ consists of all non-sink vertices of $\X$, $a_1=k$, and $c_1=r$, so the identity of $\M$ is
\[
e=n_1c_1\gamma_1=\left(\left\lfloor \frac{k+r-1}{r} \right\rfloor r \right)\gamma_1.
\]
When $r=1$ this is $\MAX$.
\end{eg}

\begin{eg}\label{ExSinkDistRegularDirected}
Let $\X$ be the graph in Figure \ref{FigDirectedDRG}, where each undirected edge is to be interpreted as a pair of directed edges, one in each direction. $\X$ is sink-distance-regular, with $d=2$, $\Gamma_1=\{v_1,v_2\}$, $\Gamma_2=\{v_3,v_4,v_5,v_6\}$, $c_1=2$, $a_1=1$, $b_1=2$, $c_2=1$, and $a_2=1$. We have
\[
n_3=0, \quad n_2=1, \quad n_1=3,
\]
so the identity of $\G$ is
\[
e = (3\cdot2-1\cdot2)\gamma_1 + (1\cdot1)\gamma_2 = 4 \gamma_1 + 1 \gamma_2 = \MAX.
\]
\end{eg}

%

{\medskip\noindent\bf Acknowledgments.} We thank L\'aszl\'o Babai and the anonymous referees for their comments and suggestions, which have helped us substantially improve our results in Section \ref{SecStronglyRegular} and our presentation of the material throughout this paper.

\bibliographystyle{plain}
\bibliography{SandpileBib}

\begin{thebibliography}{10}

\bibitem{BabaiGridTransience}
L.~Babai and I.~Gorodezky.
\newblock Sandpile transience on the grid is polynomially bounded.
\newblock In {\em Proceedings of the eighteenth annual ACM-SIAM symposium on
  Discrete algorithms}, SODA '07, pages 627--636, Philadelphia, PA, USA, 2007.
  Society for Industrial and Applied Mathematics.

\bibitem{BToump}
L.~Babai and E.~Toumpakari.
\newblock A structure theory of the sandpile monoid for directed graphs.
\newblock {\em Journal of Combinatorics}, to appear.

\bibitem{BTW1}
P.~Bak, C.~Tang, and K.~Wiesenfeld.
\newblock Self-organized criticality: An explanation of the 1/f noise.
\newblock {\em Phys. Rev. Lett.}, 59(4):381--384, July 1987.

\bibitem{Biggs}
N.~Biggs.
\newblock {\em Algebraic Graph Theory}.
\newblock Cambridge University Press; Second edition, 1993.

\bibitem{BiggsDRG}
N.~Biggs.
\newblock Chip firing on distance-regular graphs.
\newblock {\em CDAM Research Report Series}, LSE-CDAM-96-11, June 1996.

\bibitem{BiggsChipFiring}
N.~L. Biggs.
\newblock Chip-firing and the critical group of a graph.
\newblock {\em J. Algebraic Comb.}, 9:25--45, January 1999.

\bibitem{Bjorner2}
A.~Bj\"{o}rner and L.~Lov\'{a}sz.
\newblock Chip-firing games on directed graphs.
\newblock {\em J. Algebraic Comb.}, 1:305--328, December 1992.

\bibitem{Bjorner1}
A.~Bj\"{o}rner, L.~Lov\'{a}sz, and P.~W. Shor.
\newblock Chip-firing games on graphs.
\newblock {\em European J. Combin.}, 12:283--291, 1991.

\bibitem{BCN}
A.~E. Brower, A.~M. Cohen, and A.~Neumaier.
\newblock {\em Distance Regular Graphs}.
\newblock Springer, 1989.

\bibitem{CliffPres}
A.~H. Clifford and G.~B. Preston.
\newblock {\em The {A}lgebraic {T}heory of {S}emigroups}, volume~1.
\newblock Mathematical {S}urveys {N}o. 7, {AMS}, {P}rovidence, {RI}, 1961.

\bibitem{DualGraphs}
R.~Cori and D.~Rossin.
\newblock On the sandpile group of dual graphs.
\newblock {\em European J. Combin.}, 21(4):447--459, 2000.

\bibitem{Dhar}
D.~Dhar.
\newblock Self-organized critical state of sandpile automaton models.
\newblock {\em Phys. Rev. Lett.}, 64(14):1613--1616, April 1990.

\bibitem{DharSurvey}
D.~Dhar.
\newblock Theoretical studies of self-organized criticality.
\newblock {\em Physica A}, 369, 2006.

\bibitem{Dhar1995}
D.~Dhar, P.~Ruelle, S.~Sen, and D.~Verma.
\newblock Algebraic aspects of sandpile models.
\newblock {\em J. Phys. A}, pages 805--831, 1995.

\bibitem{Godsil}
C.~Godsil and G.~F. Royle.
\newblock {\em Algebraic Graph Theory}.
\newblock Springer, 2001.

\bibitem{Rotor}
A.~Holroyd, L.~Levine, K.~Meszaros, Y.~Peres, J.~Propp, and D.~Wilson.
\newblock Chip-firing and rotor-routing on directed graphs.
\newblock In {\em In and Out of Equilibrium 2}, volume~60 of {\em Progress in
  Probability}, pages 331--364. Birkhäuser Basel, 2008.

\bibitem{LevineTree}
L.~Levine.
\newblock The sandpile group of a tree.
\newblock {\em European J. Combin.}, 30:1026--1035, May 2009.

\bibitem{LevineLineGraphs}
L.~Levine.
\newblock Sandpile groups and spanning trees of directed line graphs.
\newblock {\em J. Comb. Theory Ser. A}, 118:350--364, February 2011.

\bibitem{MacWilliams}
F.~J. MacWilliams and N.~J.~A. Sloane.
\newblock {\em The Theory of Error-Correcting Codes}.
\newblock North Holland, 1988.

\bibitem{Speer}
E.~Speer.
\newblock Asymmetric abelian sandpile models.
\newblock {\em J. Stat. Phys.}, 71, 1993.

\bibitem{ToumpakariThesis}
E.~Toumpakari.
\newblock On the abelian sandpile model.
\newblock {\em Ph.D. Thesis, The University of Chicago}, 2005.

\bibitem{ToumpakariTrees}
E.~Toumpakari.
\newblock On the sandpile group of regular trees.
\newblock {\em European J. Combin.}, 28:822--842, April 2007.

\end{thebibliography}

\end{document}